\DeclarePairedDelimiter\floor{\lfloor}{\rfloor}
\pgfplotsset{%
layers/standard/.define layer set={%
    background,axis background,axis grid,axis ticks,axis lines,axis tick labels,pre main,main,axis descriptions,axis foreground%
}{grid style= {/pgfplots/on layer=axis grid},%
    tick style= {/pgfplots/on layer=axis ticks},%
    axis line style= {/pgfplots/on layer=axis lines},%
    label style= {/pgfplots/on layer=axis descriptions},%
    legend style= {/pgfplots/on layer=axis descriptions},%
    title style= {/pgfplots/on layer=axis descriptions},%
    colorbar style= {/pgfplots/on layer=axis descriptions},%
    ticklabel style= {/pgfplots/on layer=axis tick labels},%
    axis background@ style={/pgfplots/on layer=axis background},%
    3d box foreground style={/pgfplots/on layer=axis foreground},%
    },
}
\definecolor{blauw}{RGB}{61,158,255}
\definecolor{donkerblauw}{RGB}{0,0,255}
\definecolor{donkergroen}{RGB}{46,148,0}
\definecolor{donkerrood}{RGB}{204,0,0}
\definecolor{donkergroen2}{RGB}{0,95,0}
\newcommand{\revise}[1]{#1}
\newcommand{\revisefinal}[1]{#1}
\newcommand\mynobreakpar{\par\nobreak\@afterheading} 
\let\@fnsymbol\@arabic
\let\OLDthebibliography\thebibliography
\renewcommand\thebibliography[1]{
  \OLDthebibliography{#1}
  \setlength{\parskip}{0pt}
  \setlength{\itemsep}{0pt plus 0.3ex}
}
\newcommand{\N}{\mathbb{N}}
\newcommand{\Z}{\mathbb{Z}}
\newcommand{\C}{\mathbb{C}}
\newcommand{\R}{\mathbb{R}}
\newtheorem{theorem}{Theorem}[section]
\newtheorem{lemma}[theorem]{Lemma}
\newtheorem{proposition}[theorem]{Proposition}
\newtheorem{corollary}[theorem]{Corollary}
\theoremstyle{definition}
\newtheorem{definition}{Definition}[section]
\newtheorem*{examp*}{Example}
\newtheorem{remark}{Remark}[section]
\DeclareMathOperator*{\cross}{cr}
\DeclareMathOperator*{\ind}{Ind}
\theoremstyle{plain}
\newcommand{\T}{^{\sf T}}
\newcounter{thm}[section]
\pgfplotsset{compat=1.17}
\title{New lower bounds on crossing numbers of $K_{m,n}$ \\ from semidefinite programming}% \date{23-02-2021}
\author{Daniel Brosch\thanks{University of Klagenfurt and Tilburg University. \href{mailto: daniel.brosch@aau.at}{\texttt{daniel.brosch@aau.at}}.} \  \& Sven Polak\thanks{Tilburg University and Centrum Wiskunde \& Informatica, Amsterdam. \href{mailto: s.c.polak@tilburguniversity.edu}{\texttt{s.c.polak@tilburguniversity.edu}}.}}
\begin{document}
\maketitle
\setcounter{footnote}{1}

\noindent \textbf{Abstract.} 
In this paper, we use semidefinite programming and representation theory to compute new lower bounds on the crossing number of the complete bipartite graph~$K_{m,n}$, extending a method from de Klerk et al.\ [SIAM J.\ Discrete Math.\ 20 (2006), 189--202] and the subsequent reduction by De Klerk, Pasechnik and Schrijver [Math.\ Prog.\ Ser.\ A and B, 109 (2007) 613--624]. 

We exploit the full symmetry of the problem using a novel decomposition technique. This results in a full block-diagonalization of the underlying matrix algebra, which we use to improve bounds on several concrete instances. Our results imply that $\cross(K_{10,n}) \geq  4.87057 n^2 - 10n$, $\cross(K_{11,n}) \geq 5.99939 n^2-12.5n$, $   \cross(K_{12,n}) \geq 7.25579 n^2 - 15n$,  $\cross(K_{13,n}) \geq    8.65675 n^2-18n$ for all~$n$.  The latter three bounds are computed using a new and well-performing relaxation of the original semidefinite programming bound. This new relaxation is obtained by only requiring one small matrix block to be positive semidefinite.
%\keywords{Crossing numbers \and Complete bipartite graph \and Semidefinite programming \and Symmetry reduction \and Block-diagonalization}
% \PACS{PACS code1 \and PACS code2 \and more}
%\subclass{05C10 \and 68R10 \and  90C22 \and 05E10}

\section{Introduction}

Computing the crossing number $\cross(K_{m,n})$ of  the complete bipartite graph $K_{m,n}$ is a long-standing open problem, which goes back to Tur\'an in the 1940s. In 1956, Zarankiewicz~\cite{zarankiewicz} conjectured that $\cross(K_{m,n})= Z(m,n)$, where~$Z(m,n)$ is the \emph{Zarankiewicz number}
$$
Z(m,n):=\floor{\tfrac{m-1}{2}}\floor{\tfrac{m}{2}}\floor{\tfrac{n-1}{2}}\floor{\tfrac{n}{2}}=\floor{\tfrac{1}{4}(m-1)^2}\floor{\tfrac{1}{4}(n-1)^2}.
$$
Zarankiewicz claimed to have a proof for his conjecture, but this turned out to be false. The conjecture thus remains a notorious open problem. As \revise{Erd\H{o}s} and Guy~\cite{erdos} wrote in 1973: `Almost all questions that one can ask about crossing numbers remain unsolved', which is still true today. It is known that~$\cross(K_{m,n}) \leq Z(m,n)$, by exhibiting an explicit drawing of~$K_{m,n}$ in the plane with~$Z(m,n)$ crossings --- see Figure~\ref{fig:optimalexample} for an example. \revisefinal{The conjecture is proven for some small parameters: Kleitman~\cite{kleitman} proved it for $K_{m,n}$ with $m\leq 6$, and Woodall~\cite{woodall} proved it for $K_{7,7}$ and $K_{7,9}$.}% The smallest open cases are $K_{7,11}$ and $K_{9,9}$.}

In this paper, we use semidefinite programming and representation theory to prove the following lower bounds.
\begin{theorem}\label{theorem: bounds}
For all integers~$n$, 
\begin{align*}
    \cross(K_{10,n}) &\geq  4.87057 n^2 - 10n,
    \\   \cross(K_{11,n}) &\geq 5.99939 n^2-12.5n,
\\        \cross(K_{12,n}) &\geq 7.25579 n^2 - 15n,
 \\   \cross(K_{13,n}) &\geq    8.65675 n^2-18n.
\end{align*}
\end{theorem}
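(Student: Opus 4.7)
The plan is to prove the four inequalities in a uniform way by refining the semidefinite programming (SDP) framework of de Klerk, Pasechnik and Schrijver. In that framework, any good drawing of $K_{m,n}$ assigns to each vertex $v$ on the size-$n$ side a cyclic ordering $\pi(v)$ of its $m$ neighbours, and the number of crossings between the edges incident to two such vertices $v,w$ equals a fixed combinatorial function $Q(\pi(v),\pi(w))$ of those two cyclic orderings. Summing over unordered pairs and minimising over the empirical distribution of the $\pi(v)$'s yields a bound of the shape $\cross(K_{m,n}) \geq \binom{n}{2}\cdot\langle Q, X^*\rangle - (\text{linear correction in } n)$, where $X^*$ solves an SDP over moment matrices indexed by cyclic orderings of $[m]$. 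The linear correction is handled by a standard combinatorial argument carried over from the previous literature, and it produces precisely the terms $10n,\,12.5n,\,15n,\,18n$ for $m=10,11,12,13$; the real difficulty therefore lies in obtaining the stated leading coefficients.

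The first main step is to block-diagonalise this SDP using its full symmetry. The natural group acting on cyclic orderings of $[m]$ is $S_m$ (relabelling vertices), together with the reflection reversing a cyclic order; the permutation module on cyclic orderings is isomorphic to $\mathrm{Ind}_{D_m}^{S_m}\mathbf{1}$, where $D_m$ is the dihedral subgroup stabilising one fixed ordering. I would compute its isotypic decomposition via Frobenius reciprocity, build an explicit symmetry-adapted orthogonal basis, and then apply the standard $*$-algebra reduction so that the moment matrix becomes a direct sum of blocks, one per irreducible $S_m$-representation that appears, of size equal to its multiplicity. This is exactly the \emph{full symmetry} block-diagonalisation advertised in the abstract and is the main technical obstacle of the paper: the permutation module on cyclic orderings is not a Young permutation module, and writing down intertwiners in a form that remains usable and numerically stable up to $m=13$ requires careful combinatorial work.

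With the block structure in hand, the bound for $K_{10,n}$ is obtained by running an SDP solver on the (now much smaller) block-diagonalised program, extracting a dual solution, and converting it into a rigorous certificate, for instance by rounding to rationals and verifying positive semidefiniteness of each block exactly via Cholesky or $LDL^\top$; this yields the stated leading coefficient $4.87057$. For $m\in\{11,12,13\}$ even the fully reduced SDP is too large to certify at the required precision. Here I would apply the relaxation announced in the abstract: drop the positive-semidefiniteness constraint on every block except one small, well-chosen one. Since this is a relaxation of the original SDP, its optimum remains a valid lower bound on $\cross(K_{m,n})$, while the small size of a single block makes both numerical solution and exact rational certification tractable, giving the remaining three inequalities.

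The hardest part is the interaction between the representation-theoretic decomposition and rigorous numerical verification: an explicit block-diagonalisation must be produced in a form that stays numerically stable at $m=13$, and the resulting SDP dual solutions must then be rounded to exact rationals without losing the fifth-decimal accuracy stated in the theorem, which in practice requires an iterative rounding-and-reverification scheme. Everything else (setting up $Q$, the linear correction $d_m n$, and passing from the SDP value to the displayed inequality) follows the earlier literature.
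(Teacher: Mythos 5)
Your proposal matches the paper's strategy almost exactly: build the $Q$-based quadratic/semidefinite program of de Klerk--Maharry--Pasechnik--Richter--Salazar, block-diagonalize it by the full symmetry of the cyclic-order space, compute $\alpha_{10}$ from the full block-diagonalization, and for $m=11,12,13$ drop positive semidefiniteness of all but one small, well-chosen block to get a tractable relaxation $\beta_m$; the leading coefficients in the theorem are $\tfrac12\alpha_{10}$, $\tfrac12\beta_{11}$, $\tfrac12\beta_{12}$, $\tfrac12\beta_{13}$, and the linear term is $\tfrac12\lfloor(m-1)^2/4\rfloor\,n$ as in the de Klerk et al.\ bound.

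One concrete slip: you identify the permutation module on cyclic orderings with $\mathrm{Ind}_{D_m}^{S_m}\mathbf{1}$, but that has dimension $m!/(2m)=(m-1)!/2$, whereas the relevant space $\C^{Z_m}$ (indexed by $m$-cycles) has dimension $(m-1)!$. The point-stabilizer of an $m$-cycle under $S_m$-conjugation is only the cyclic group $\Z/m\Z$, so $\C^{Z_m}\cong\mathrm{Ind}_{\Z/m\Z}^{S_m}\mathbf{1}$; the reflection is not in the point-stabilizer but is an extra commuting $S_2\cong\{\pm1\}$-action (inversion of the cycle), which the paper splits off afterwards. If you carry the $D_m$ claim through you will get the wrong multiplicities. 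The paper also does not compute the isotypic decomposition abstractly via Frobenius reciprocity and then ``build'' a basis from scratch; instead it takes the standard semistandard-tableau representative set for $M^{(1^m)}$, pushes it through the natural surjection $M^{(1^m)}\to\C^{Z_m}$, and extracts a minimal spanning subset (their Proposition~3.2), which is what makes the construction concretely implementable. Finally, the paper does not pin down the ``well-chosen block'' abstractly: it is the $(-)$-eigenspace block of $\lambda=(m-2,1,1)$, and the paper explicitly describes its columns via the tableaux $M_i$; without that identification the relaxation $\beta_m$ is underspecified. Also, the paper certifies the bounds using high-precision SDPA rather than the exact rational rounding you describe, though that is a cosmetic difference.
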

This theorem and Corollary~\ref{boundscorollary} below yield the best known lower bounds on all fixed $\cross(K_{m,n})$ with~$m,n \geq 10$. The best previously known lower bounds are $\cross(K_{m,n}) \geq  \tfrac{(m-1)m}{72} (3.86760n^2-8n)$ for~$m,n\geq 10$, cf.~\cite{regular}.  For an overview of known results regarding Zarankiewicz's conjecture, see the survey by Sz\'ekely~\cite{szekely}, or the survey about crossing numbers  by Schaefer~\cite{schaefer}. 
\begin{figure}[ht]\vspace{-10pt}\begin{center}
\begin{tikzpicture}[scale=0.55]
    % place nodes
    \node[draw, circle,fill=black,inner sep=0pt,minimum size=3pt] at (-3, 0)   (a1) {};
    \node[draw, circle,fill=black,inner sep=0pt,minimum size=3pt] at (-2, 0)   (a2) {};
    \node[draw, circle,fill=black,inner sep=0pt,minimum size=3pt] at (-1, 0)   (a3) {};
    \node[draw, circle,fill=black,inner sep=0pt,minimum size=3pt] at (0.75, 0)   (a4) {};
    \node[draw, circle,fill=black,inner sep=0pt,minimum size=3pt] at (1.5, 0)   (a5) {};
    \node[draw, circle,fill=black,inner sep=0pt,minimum size=3pt] at (2.25, 0)   (a6) {};
    \node[draw, circle,fill=black,inner sep=0pt,minimum size=3pt] at (3, 0)   (a7) {};

    \node[draw, circle,fill=black,inner sep=0pt,minimum size=3pt] at (0, -1.5)  (b1)     {};
    \node[draw, circle,fill=black,inner sep=0pt,minimum size=3pt] at (0, -.75)  (b2)     {};
    \node[draw, circle,fill=black,inner sep=0pt,minimum size=3pt] at (0, .75)  (b3)     {};
    \node[draw, circle,fill=black,inner sep=0pt,minimum size=3pt] at (0, 1.5)  (b4)     {};
    \node[draw, circle,fill=black,inner sep=0pt,minimum size=3pt] at (0, 2.25)  (b5)     {};
    % draw edges
    \draw[] (a1) node[above,xshift=1cm] {} -- (b1);
    \draw[] (a1) node[above,xshift=1cm] {} -- (b2);
    \draw[] (a1) node[above,xshift=1cm] {} -- (b3);
    \draw[] (a1) node[above,xshift=1cm] {} -- (b4);
    \draw[] (a1) node[above,xshift=1cm] {} -- (b5);

    \draw[] (a2) node[above,xshift=1cm] {} -- (b1);
    \draw[] (a2) node[above,xshift=1cm] {} -- (b2);
    \draw[] (a2) node[above,xshift=1cm] {} -- (b3);
    \draw[] (a2) node[above,xshift=1cm] {} -- (b4);
    \draw[] (a2) node[above,xshift=1cm] {} -- (b5);

    \draw[] (a3) node[above,xshift=1cm] {} -- (b1);
    \draw[] (a3) node[above,xshift=1cm] {} -- (b2);
    \draw[] (a3) node[above,xshift=1cm] {} -- (b3);
    \draw[] (a3) node[above,xshift=1cm] {} -- (b4);
    \draw[] (a3) node[above,xshift=1cm] {} -- (b5);

    \draw[] (a4) node[above,xshift=1cm] {} -- (b1);
    \draw[] (a4) node[above,xshift=1cm] {} -- (b2);
    \draw[] (a4) node[above,xshift=1cm] {} -- (b3);
    \draw[] (a4) node[above,xshift=1cm] {} -- (b4);
    \draw[] (a4) node[above,xshift=1cm] {} -- (b5);

    \draw[] (a5) node[above,xshift=1cm] {} -- (b1);
    \draw[] (a5) node[above,xshift=1cm] {} -- (b2);
    \draw[] (a5) node[above,xshift=1cm] {} -- (b3);
    \draw[] (a5) node[above,xshift=1cm] {} -- (b4);
    \draw[] (a5) node[above,xshift=1cm] {} -- (b5);

    \draw[] (a6) node[above,xshift=1cm] {} -- (b1);
    \draw[] (a6) node[above,xshift=1cm] {} -- (b2);
    \draw[] (a6) node[above,xshift=1cm] {} -- (b3);
    \draw[] (a6) node[above,xshift=1cm] {} -- (b4);
    \draw[] (a6) node[above,xshift=1cm] {} -- (b5);

    \draw[] (a7) node[above,xshift=1cm] {} -- (b1);
    \draw[] (a7) node[above,xshift=1cm] {} -- (b2);
    \draw[] (a7) node[above,xshift=1cm] {} -- (b3);
    \draw[] (a7) node[above,xshift=1cm] {} -- (b4);
    \draw[] (a7) node[above,xshift=1cm] {} -- (b5);
\end{tikzpicture}
\caption{Optimal drawing of~$K_{5,7}$. \label{fig:optimalexample}}
\end{center}\vspace{-20pt}
\end{figure}
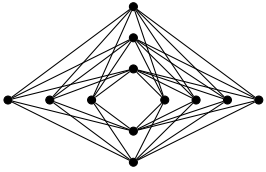

We now sketch how these lower bounds are derived. For~$m \in \N$, let~$Z_m$ be the set of permutations of~$[m]:=\{1,\ldots,m\}$ consisting of a single orbit, i.e., $Z_m$ is the set of all $m$-cycles from~$S_m$ and~$|Z_m|=(m-1)!$. Let~$K_{m,n}$ have colour classes~$\{1,\ldots,m\}$ and~$\{b_1,\ldots,b_n\}$. For any given drawing of~$K_{m,n}$ in the plane, define $\gamma(b_i)$ to be the cyclic permutation~$(1,i_2,\ldots,i_m) \in Z_m$ with the property that the edges leaving~$b_i$ in clockwise order go to~$1,i_2,\ldots,i_m$. 

\begin{table}[ht]\small\centering
    \begin{tabular}{cS[table-format=4.0]>{\bfseries}S[table-format=4.0]S[table-format=4.0]}\toprule
% $n$& \multicolumn{1}{>{\raggedright\arraybackslash}b{9.5mm}|}{\scriptsize best lower bound previously known}  & \multicolumn{1}{>{\raggedright\arraybackslash}b{8.05mm}|}{\small new lower bound} & $Z(n,n)$  \\\midrule
$n$& {\begin{tabular}{@{}c@{}}best previously\\ known lower bound\end{tabular}}  & {\small new lower bound} & {$Z(n,n)$}  \\\midrule
         10 &  384 & 388    & 400  \\
         11   & 581 & 589 & 625  \\    %0.8726 =8beta_k/ (k(k-1))
         12  & 846& 865 & 900 \\ % 0.8794 =8beta_k/ (k(k-1)) (rounded down on 4 decimals)
      13 &  1192&1229  & 1296 \\\bottomrule 
    \end{tabular}\caption{\footnotesize Some of our new lower   bounds on~$\cross(K_{n,n})$. The previously best known lower bounds follow from~\cite{regular}.}\end{table}

Let~$Q$ be the~$Z_m \times Z_m$ matrix with for any~$\sigma,\tau \in Z_m$,  the entry $Q_{\sigma,\tau}$ is equal to the minimum number of crossings in any drawing of~$K_{m,2}$ with~$\gamma(b_1) =\sigma$ and~\revise{$\gamma(b_2)=\tau$}. 
This matrix was defined in~\cite{klerkcrossing} and later also used in~\cite{regular}. An algorithm to compute~$Q_{\sigma,\tau}$ was used by Kleitman~\cite{kleitman} and more details were described by Woodall~\cite{woodall}. For example, $Q_{\sigma,\sigma}=\floor{ \tfrac{1}{4} (m-1)^2}$ for all~$\sigma \in Z_m$. Let~$\bm{1} \in \R^{Z_m}$ denote the all-ones vector. Consider the following quadratic program.
\begin{align}\label{mainqp}
  q_m := \min \left\{x\T Q x \, | \, x \in \R^{Z_m}_{\geq 0}, \, x\T \bm{1} =1  \right\}. 
\end{align}\vspace{-3pt}
\begin{theorem}[De Klerk et al.~\cite{klerkcrossing}]\label{klerktheorem}
$\cross(K_{m,n})\geq \tfrac{1}{2}n^2q_m - \tfrac{1}{2} n \floor{ \tfrac{1}{4} (m-1)^2}$ for all~$m,n$.
\end{theorem}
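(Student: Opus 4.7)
The idea is to distill any drawing of~$K_{m,n}$ into a feasible point of the quadratic program~\eqref{mainqp} whose objective value bounds the number of crossings. Fix an optimal drawing~$D$ of~$K_{m,n}$, read off the clockwise cyclic rotation~$\gamma(b_i) \in Z_m$ at each~$b_i$, and define the empirical distribution $x \in \R^{Z_m}_{\geq 0}$ by $x_\sigma := \tfrac{1}{n}\,|\{i \in [n] : \gamma(b_i) = \sigma\}|$; then $x\T \bm{1} = 1$, so~$x$ is feasible for~\eqref{mainqp}.

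Two edges sharing a common $b$-endpoint cannot cross, so every crossing in~$D$ occurs between one edge incident to some~$b_i$ and one incident to some~$b_j$ with~$i \neq j$. Writing~$c_{ij}$ for the number of such crossings,
\[
2\,\cross(D) \;=\; \sum_{i \neq j} c_{ij}.
\]
The edges at~$b_i$ and~$b_j$, together with the vertex set~$[m]$, form a sub-drawing of~$K_{m,2}$ whose two hub-rotations are~$\gamma(b_i)$ and~$\gamma(b_j)$. The defining property of~$Q$---its second index carries an inversion that absorbs the orientation convention used when passing between the two hubs of a $K_{m,2}$---then yields the pair bound $c_{ij} \geq Q_{\gamma(b_i),\,\gamma(b_j)}$.

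Sum this bound over~$i \neq j$, peel off the $i = j$ diagonal using the identity $Q_{\sigma,\sigma} = \floor{(m-1)^2/4}$, and rewrite in terms of~$x$:
\[
\sum_{i \neq j} Q_{\gamma(b_i),\,\gamma(b_j)} \;=\; \sum_{i,j} Q_{\gamma(b_i),\,\gamma(b_j)} \;-\; \sum_i Q_{\gamma(b_i),\,\gamma(b_i)} \;=\; n^{2}\, x\T Q x \;-\; n\floor{\tfrac{(m-1)^2}{4}}.
\]
Feasibility of~$x$ gives $x\T Q x \geq q_m$, and combining the two displays yields the claim
\[
2\,\cross(K_{m,n}) \;\geq\; n^{2}\,q_m \;-\; n\floor{\tfrac{(m-1)^2}{4}}.
\]

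The only non-routine ingredient is the pair bound $c_{ij} \geq Q_{\gamma(b_i),\,\gamma(b_j)}$: all the algebra thereafter is a standard quadratic-programming relaxation, but this is a combinatorial lower bound on the number of crossings of the $K_{m,2}$ sub-drawing given only its two hub-rotations---precisely what the matrix~$Q$ encodes, via the Kleitman--Woodall algorithm referenced above. I expect this pair bound (and, relatedly, the careful bookkeeping of the orientation convention that explains the inversion in the definition of~$Q$) to be the main obstacle; once it is in hand, the theorem follows by the one-line relaxation sketched here.
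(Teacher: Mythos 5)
Your proof is correct and takes essentially the same route as the paper's: both pass to the empirical rotation distribution $x = c/n$, invoke the per-pair bound $d_{i,j} \geq Q_{\gamma(b_i),\gamma(b_j)}$ (which the paper likewise states by appeal to the definition of~$Q$ rather than re-deriving), and then rearrange the double sum to peel off the diagonal term $\sum_i Q_{\gamma(b_i),\gamma(b_i)} = n\floor{(m-1)^2/4}$. The only difference is cosmetic ordering of the algebra.
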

\proof 
Suppose a drawing of~$K_{m,n}$ with~$\cross(K_{m,n})$ crossings is given. For each~$\sigma \in Z_m$, let $c_{\sigma}$ be the number of vertices~$b_i$ with~$\gamma(b_i)=\sigma$. We view~$c$ as a vector in~$\R^{Z_m}$ and define $x := n^{-1} c$.
Then~$x$ satisfies the conditions in~\eqref{mainqp}, so~$q_m \leq x\T Qx$. For~$i,j \in [n]$ let~$d_{i,j}$ be the number of crossings of edges leaving~$b_i$ with edges leaving $b_j$. By definition of~$Q$, if~$i\neq j$, then~$d_{i,j} \geq Q_{\gamma(b_i),\gamma(b_j)}$.  This implies \vspace{-10pt}
\begin{align*}
  \tfrac{1}{2} n^2 q_m &\leq \tfrac{1}{2} n^2 x\T Qx = \tfrac{1}{2}c\T Q c =  \tfrac{1}{2}\sum_{i,j=1}^n Q_{\gamma(b_i),\gamma(b_j)}  \leq  \sum_{\substack{i< j}} d_{i,j} + \tfrac{1}{2}\sum_{i=1}^n Q_{\gamma(b_i),\gamma(b_i)} 
   \\[0pt]&\revise{\leq} \cross(K_{m,n}) + \tfrac{1}{2}n\floor{\tfrac{1}{4}(m-1)^2},
\end{align*}
where the last inequality follows from $Q_{\sigma,\sigma}=\floor{ \tfrac{1}{4} (m-1)^2}$ for all~$\sigma \in Z_m$. \revise{(In fact, the last inequality is an equality as one may assume that in an optimal drawing edges incident to a common vertex do not cross, cf.~\cite{GJ83}.)} 
\endproof
The following semidefinite programming parameter~$\alpha_m$ is a lower bound on~$q_m$.
\begin{align}\label{mainsdp}
    \alpha_m := \min \left\{\langle Q, X\rangle \, | \, X \in \R^{Z_m \times Z_m}_{\geq 0}, \, \langle J,X \rangle =1, \, X \succeq 0  \right\}.
\end{align}
Here $X\succeq 0$ means `$X$ symmetric and positive semidefinite'. It is clear that~$q_m \geq \alpha_m$, as any feasible~$x$ for~$q_m$ gives a feasible~$X=xx\T$ for~$\alpha_m$ with the same objective value. The values~$\alpha_m$ for~$m\leq 7$ were computed by De Klerk, Maharry,  Pasechnik, Richter, and Salazar~\cite{klerkcrossing}. Dobre and Vera~\cite{dobrevera} computed a better lower bound on~$q_7$ using semidefinite approximations of the copositive cone. The values~$\alpha_8$ and~$\alpha_9$ were computed by De Klerk, Pasechnik and Schrijver~\cite{regular}, who used the regular $*$-representation to reduce the semidefinite programs in size. The regular $*$-representation found several applications (see, e.g.,~\cite{laurent} for an application in coding theory). In this paper, we show how a full block-diagonalization can be obtained\revise{, where we exploit properties of the representation theory of the symmetric group for computational efficiency.} This allows us to compute the value~$\alpha_{10}$.

\revise{A full symmetry reduction for computing~$\alpha_m$ has been developed before by Hymabaccus and Pasechnik~\cite{HP20}. Their method can be used to decompose representations of finite groups exactly. Due to the generic nature of their algorithm, they work with representation matrices instead of vectors in the representative sets. This costs a lot of memory (and time), so they only reach~$\alpha_7$ with their method.  In the crossing number case, the coefficients in their block-diagonalization  contain irrational numbers, potentially leading to rounding issues in floating-point computations. An advantage of our approach is, apart from being more memory and time efficient, that it results in an exact block-diagonalization with integer coefficients.}

Our symmetry reduction consists of three steps.  First, we use classical representation theory of the symmetric group~$S_m$ to decompose a well-known permutation module. Secondly, we we use an elementary but crucial observation given in Proposition~\ref{representativeprop}, to transform this decomposition into a decomposition of~$\R^{Z_m}$ as $S_m$-module. Proposition~\ref{representativeprop} has potential for a wide array of applications, for example, it can also be directly applied to a problem in coding theory, which we describe in Remark~\ref{codingremark} below. The third and final step in our block-diagonalization takes into account a separate $\{\pm 1 \}$-action, in Proposition~\ref{s2proposition}. 

Inspired by our symmetry reduction, we also formulate a new relaxation of~$\alpha_m$, which we call~$\beta_m$. The value~$\beta_m$ is obtained from~$\eqref{mainsdp}$ by only requiring that one specified block, which is described in Section~\ref{sec: betak} below,  in the block-diagonalization is positive semidefinite instead of the full matrix~$X$. So~$\beta_m \leq \alpha_m$, and our experiments show that the new bound~$\beta_m$ is remarkably close to~$\alpha_m$.  We give a combinatorial desciption of the vectors which underly the block-diagonalization of~$\beta_m$ in Proposition \ref{betacombinatorialproposition}. Also, we compute the value~$\beta_m$ for~$m\leq 13$. The values are provided in Table~\ref{table: boundstable}. Inserting our newly computed values $\alpha_{10}$, $\beta_{11}$, $\beta_{12}$, $\beta_{13}$  in Theorem~\ref{klerktheorem} instead of~$q_k$ (using the fact that $\beta_k \leq \alpha_k \leq q_k$), we directly obtain our new bounds in Theorem~\ref{theorem: bounds}.
\vspace{-8pt}
% \sisetup{round-mode=places,round-precision=3}
\begin{table}[ht]\centering \small
    \begin{tabular}{S[table-format=2.0]S[table-format=1.10]S[table-format=1.4]S[table-format=2.10]S[table-format=1.4]}\toprule
$m$& $\alpha_m$ &  $ \frac{8\alpha_m}{k(k-1)}$  & $\beta_m$  &$ \frac{8\beta_m}{m(m-1)}$\\\midrule
         4 & 1.0000000000 & 0.6667  & 1.0000000000 & 0.6667 \\
         5 & 1.9472135954 & 0.7789 & 1.9270509831  & 0.7708 \\
         6 & 2.9519183588 & 0.7872 & 2.9519183588  & 0.7872 \\
         7 & 4.3593154948 & 0.8303 &   4.3107391257 &  0.8210 \\
         8 & 5.8599856417   & 0.8371 & 5.8284271247 &  0.8326 \\
         9 & 7.7352125975 &  0.8595 & 7.6527560430 &   0.8503\\
         10 &  $\bm{9.7411403685}$   & $\bm{0.8659}$ & 9.6866252078    &0.8610 \\
         11 &  & & $\bm{11.9987919703}$ & $\bm{0.8726}$ \\    %0.8726 =8beta_k/ (k(k-1))
         12 & & & $\bm{14.5115811776}$ & $\bm{0.8794}$\\ % 0.8794 =8beta_k/ (k(k-1)) (rounded down on 4 decimals)
      13 & & & $\bm{17.3135089904}$ & $\bm{0.8878}$\\\bottomrule 
    \end{tabular}
    \caption{The full semidefinite bound~$\alpha_m$ from~\eqref{mainsdp} and our  relaxation $\beta_m$ which is described in Section~\ref{sec: betak}. We solved the SDPs with multiple precision versions of SDPA~\cite{nakata}, \revise{and then rounded the dual solutions to rational feasible dual solutions, see Section \ref{sec: verify bounds}.}\label{table: boundstable} }
\end{table}\vspace{-25pt}
\subsection{Outline of the paper}\vspace{-5pt}
In Section~\ref{sec: derived} we explain the consequences of Theorem~\ref{theorem: bounds}: we investigate to which bounds it leads and relate these bounds to the literature. In Section~\ref{sec: symmetry} we explain how the symmetry can be used to significantly reduce the problem: we develop a full block-diagonalization. To do this, we use representation theory from the symmetric group and linear algebra. After that, we explain in Section~\ref{sec: betak} how our new relaxation~$\beta_m$ of~$\alpha_m$ is defined, which is inspired by the symmetry reduction. We give a combinatorial desciption of the vectors which underly the block-diagonalization of~$\beta_m$.
Finally, in Section~\ref{sec: computation} we give details about our computations. Here we explain how~$\beta_m$ can be computed in practice: using the dual description in combination with an iterative procedure, we are able to compute~$\beta_m$ for~$m \leq 13$ up to high precision on a desktop computer.  %we formulate the dual semidefinite program which has many linear constraints and only one small matrix block which is required to be positive semidefinite. We solve the semidefinite program without the linear constraints, and iteratively add the most violated constraint, until no constraints are violated anymore. In this way, we are able to compute~$\beta_k$ for~$k \leq 13$ up to high precision on a desktop computer.

\vspace{-10pt}
\section{Derived lower bounds \label{sec: derived}}\vspace{-5pt}
Suppose that~$2 \leq k \leq m$ and that~$n \in \N$. There are~$\binom{m}{k}$ distinct copies of~$K_{k,n}$ in~$K_{m,n}$, and in any drawing of~$K_{m,n}$, each crossing appears in~$\binom{m-2}{k-2}$ distinct copies of~$K_{k,n}$. This implies that
\begin{align} \label{eq: doublecount}
\cross(K_{m,n}) \geq \frac{\cross(K_{k,n}) \binom{m}{k}}{\binom{m-2}{k-2}} = \frac{\cross(K_{k,n}) \cdot m(m-1)}{k(k-1)}.
\end{align}
So any lower bound on~$q_k$ gives lower bounds on $\cross(K_{m,n}) $  for all~$m \geq k$ and all~$n$. Combining~\eqref{eq: doublecount} with our new lower bounds~$\alpha_{10}$, $\beta_{11}$, $\beta_{12}$, $\beta_{13}$ presented in Table~\ref{table: boundstable} gives the following.
\begin{corollary}\label{boundscorollary} For all integers $n$ we have
\begin{align*}
    \text{for all~$m \geq 10$},\,\, \cross(K_{m,n}) &\geq  0.0541m(m-1)n^2 - \tfrac{1}{9}m(m-1)n,
    \\         \text{for all~$m \geq 11$},\,\,   \cross(K_{m,n}) &\geq 0.0545m(m-1)n^2 - \tfrac{5}{44}m(m-1)n,
\\             \text{for all~$m \geq 12$},\,\,    \cross(K_{m,n}) &\geq 0.0549m(m-1)n^2 - \tfrac{5}{44}m(m-1)n,
 \\          \text{for all~$m \geq 13$},\,\,  \cross(K_{m,n}) &\geq  0.0554m(m-1)n^2 - \tfrac{3}{26}m(m-1)n.
\end{align*}
\end{corollary}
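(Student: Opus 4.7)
The plan is to apply the double-counting inequality~\eqref{eq: doublecount} four times, once for each value $k \in \{10, 11, 12, 13\}$, using the lower bound on $\cross(K_{k,n})$ supplied by Theorem~\ref{theorem: bounds}. Since inequality~\eqref{eq: doublecount} is already proved in the paragraph preceding the corollary, the work reduces entirely to substitution and arithmetic simplification.

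Concretely, first I would set $k=10$ in~\eqref{eq: doublecount} to get
$$\cross(K_{m,n}) \geq \frac{\cross(K_{10,n}) \cdot m(m-1)}{10 \cdot 9}$$
for all $m \geq 10$, then substitute $\cross(K_{10,n}) \geq 4.87057\, n^2 - 10 n$ from Theorem~\ref{theorem: bounds}. Dividing through yields the coefficient $4.87057/90$ on $m(m-1)n^2$, which exceeds $0.0541$, and the coefficient $-10/90 = -1/9$ on $m(m-1)n$, giving the first inequality. I would then repeat this verbatim for $k=11,12,13$, where the three needed rational reductions are $12.5/110 = 5/44$, $15/132 = 5/44$, and $18/156 = 3/26$, while the three leading coefficients simplify to
\begin{align*}
\frac{5.99939}{110} &> 0.0545, & \frac{7.25579}{132} &> 0.0549, & \frac{8.65675}{156} &> 0.0554,
\end{align*}
matching the claimed constants in the corollary.

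The main (and in fact only) obstacle is to check that the decimal truncations on the right-hand side remain valid lower bounds, i.e.\ that rounding $4.87057/90$, $5.99939/110$, $7.25579/132$, and $8.65675/156$ down to four decimal places is safe; this is immediate from direct computation. No further symmetry or SDP input is required beyond what Theorem~\ref{theorem: bounds} already provides.
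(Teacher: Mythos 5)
Your proposal is correct and is essentially the paper's own argument: the paper likewise plugs the lower bounds on $\cross(K_{k,n})$ for $k=10,11,12,13$ (which it re-derives inline from Theorem~\ref{klerktheorem} and the computed values $\alpha_{10},\beta_{11},\beta_{12},\beta_{13}$, and which are precisely the content of Theorem~\ref{theorem: bounds}) into the double-counting inequality~\eqref{eq: doublecount}. The arithmetic you report checks out, including the exact reductions $10/90=1/9$, $12.5/110=15/132=5/44$, $18/156=3/26$ and the safe downward rounding of the quadratic coefficients.
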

\proof
By Theorem~\ref{klerktheorem}, we have $\cross(K_{k,n})\geq \tfrac{1}{2}n^2q_k - \tfrac{1}{2} n \floor{ \tfrac{1}{4} (k-1)^2}$ for all~$k,n$. We also have~$q_k \geq \alpha_k \geq \beta_k$ for all~$k$, hence the inequality holds upon replacing~$q_k$ by~$\alpha_k$ or~$\beta_k$. Combining this equation  with our computed values  $\alpha_{10}$, $\beta_{11}$, $\beta_{12}$, $\beta_{13}$ results in lower bounds on~$\cross(K_{10,n})$, $\cross(K_{11,n})$, $\cross(K_{12,n})$ and~$\cross(K_{13,n})$, respectively. Inserting these lower bounds in equation~\eqref{eq: doublecount} for~$\cross(K_{k,n})$ yields the corollary. 
\endproof

The lower bounds also allow to give statements about limits, using the following lemma.

\begin{lemma}[De Klerk et al.~\cite{klerkcrossing}]
$\displaystyle{\lim_{n \to \infty} \frac{\cross(K_{m,n})}{Z(m,n)} \geq \frac{8 q_k}{k(k-1)} \frac{m}{m-1}}$  for all~$k 
\leq m$.
\end{lemma}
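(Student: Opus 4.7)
The plan is to combine the two lower bounds already established in the paper and then take the limit in $n$, exploiting the exact asymptotic of $Z(m,n)$. Specifically, I would chain the doubling-count inequality \eqref{eq: doublecount} with Theorem~\ref{klerktheorem} (applied at parameter $k$), yielding
\begin{equation*}
\cross(K_{m,n}) \;\geq\; \frac{m(m-1)}{k(k-1)}\cdot \cross(K_{k,n}) \;\geq\; \frac{m(m-1)}{k(k-1)}\left(\tfrac{1}{2}n^2 q_k - \tfrac{1}{2}n\floor{\tfrac{1}{4}(k-1)^2}\right).
\end{equation*}
This is valid for every $n$ (the linear-in-$n$ correction term is what will vanish in the limit).

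Next I would divide both sides by $Z(m,n) = \floor{\tfrac{1}{4}(m-1)^2}\floor{\tfrac{1}{4}(n-1)^2}$ and take $n\to\infty$. The only $n$-dependent factor on the right-hand side is $(\tfrac{1}{2}n^2 q_k - \tfrac{1}{2}n\floor{\tfrac{1}{4}(k-1)^2})/\floor{\tfrac{1}{4}(n-1)^2}$, which tends to $2q_k$ since $\floor{\tfrac{1}{4}(n-1)^2}/n^2 \to 1/4$ and the linear term is absorbed. This gives
\begin{equation*}
\lim_{n\to\infty}\frac{\cross(K_{m,n})}{Z(m,n)} \;\geq\; \frac{2\,m(m-1)\,q_k}{k(k-1)\,\floor{\tfrac{1}{4}(m-1)^2}}.
\end{equation*}

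Finally, I would use the elementary estimate $\floor{\tfrac{1}{4}(m-1)^2} \leq \tfrac{(m-1)^2}{4}$ to simplify the denominator, producing
\begin{equation*}
\frac{2\,m(m-1)\,q_k}{k(k-1)\,\floor{\tfrac{1}{4}(m-1)^2}} \;\geq\; \frac{8\,m(m-1)\,q_k}{k(k-1)\,(m-1)^2} \;=\; \frac{8\,q_k}{k(k-1)}\cdot\frac{m}{m-1},
\end{equation*}
which is exactly the claimed inequality. There is no real obstacle here: the argument is essentially bookkeeping once one has \eqref{eq: doublecount} and Theorem~\ref{klerktheorem}. The only point that requires a little care is ensuring that the linear-in-$n$ correction term from Theorem~\ref{klerktheorem} and the floor function in $Z(m,n)$ both behave correctly in the limit, but both are handled cleanly by the standard estimate $\floor{\tfrac{1}{4}(n-1)^2} = \tfrac{n^2}{4}(1 + o(1))$ and by discarding the $\Theta(n)$ term against the $\Theta(n^2)$ main term. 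The use of the one-sided bound $\floor{\tfrac{1}{4}(m-1)^2} \leq (m-1)^2/4$ (rather than an exact value) explains why the stated conclusion has the clean form $\tfrac{m}{m-1}$ uniformly in the parity of $m$.
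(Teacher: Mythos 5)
Your proof is correct and takes essentially the same route as the paper: both chain the double-counting bound \eqref{eq: doublecount} with Theorem~\ref{klerktheorem}, divide by $Z(m,n)$, take $n\to\infty$, and use $\floor{\tfrac{1}{4}(m-1)^2}\leq\tfrac{(m-1)^2}{4}$ in the last step. Your write-up is in fact slightly cleaner than the paper's, which has a small typo in its intermediate step (dropping the floor around $\tfrac{1}{4}(m-1)^2$ while still writing an equality sign), a point you handle carefully.
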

\begin{proof} \revise{First, note that the limit exists: the sequence~$(\text{cr}(K_{m,n})/\tbinom{n}{2})_{n \in \N}$ for fixed~$m$ is nondecreasing (by the same calculation as in~\eqref{eq: doublecount} but now applied to~$n$ instead of~$m$) and bounded (using~$\text{cr}(K_{m,n}) \leq Z_{m,n}$), hence has a limit.  For fixed~$m$, both $Z_{m,n}$ and~$\tbinom{n}{2}$ grow quadratically in~$n$, so the limit $\frac{\cross(K_{m,n})}{Z(m,n)})_{n \in \N}$ exists too.} 
\revise{The lemma now follows from an elementary calculation} using the bounds previously given. By Theorem~\ref{klerktheorem}, we have $\cross(K_{k,n})\geq \tfrac{1}{2}n^2q_k - \tfrac{1}{2} n \floor{ \tfrac{1}{4} (k-1)^2}$ for all~$k,n$. Now, we use~\eqref{eq: doublecount} and find, for~$m\geq k$:
\begin{align*}
\hspace{45pt}\lim_{n \to \infty} \frac{\cross(K_{m,n})}{Z(m,n)} &\geq\lim_{n \to \infty} \frac{m(m-1)(\tfrac{1}{2}n^2q_k - \tfrac{1}{2} n \floor{ \tfrac{1}{4} (k-1)^2})}{k(k-1)Z_{m,n}} 
\\&= \lim_{n \to \infty} \frac{m(m-1)(\tfrac{1}{2}n^2q_k - \tfrac{1}{2} n \floor{ \tfrac{1}{4} (k-1)^2})}{k(k-1)\floor{\tfrac{1}{4}(m-1)^2}\floor{\tfrac{1}{4}(n-1)^2}}
\\& =  \frac{2 q_k}{k(k-1)} \frac{m(m-1)}{\floor{\tfrac{1}{4}(m-1)^2}} 
\geq   \frac{8 q_k}{k(k-1)} \frac{m}{m-1}.  \hspace{44pt}   %qedhere does not work
\end{align*} 
\end{proof}
As~$q_k \geq \alpha_k \geq \beta_k$, the lemma also holds upon replacing~$q_k$ by~$\alpha_k$ or~$\beta_k$. So our computed values  $\alpha_{10}$, $\beta_{11}$, $\beta_{12}$, $\beta_{13}$ give asymptotic lower bounds on $\lim_{n \to \infty} \frac{\cross(K_{m,n})}{Z(m,n)} $ for~$m \geq k$. In the following lemma, we provide the lower bound for~$m \geq 13$, using our computed value~$\beta_{13}$. The lower bounds for~$m=10,11,12$ are displayed in Table~\ref{table: boundstable}. 
\begin{corollary}For all $m \geq 13$,
$
\displaystyle{\lim_{n \to \infty} \frac{\cross(K_{m,n})}{Z(m,n)} \geq 0.8878 \tfrac{m}{m-1}.}
$
\end{corollary}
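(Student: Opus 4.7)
The plan is to apply the preceding lemma with $k=13$, combined with the computed value $\beta_{13}$ from Table~\ref{table: boundstable}, and then verify the numerical inequality.

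First I would invoke the lemma stated just above: for every $k \leq m$,
\[
\lim_{n \to \infty} \frac{\cross(K_{m,n})}{Z(m,n)} \geq \frac{8 q_k}{k(k-1)} \cdot \frac{m}{m-1}.
\]
As noted immediately after the lemma, since $q_k \geq \alpha_k \geq \beta_k$, we may freely replace $q_k$ by $\beta_k$ on the right-hand side. Specializing to $k=13$ and substituting the value $\beta_{13} = 17.3135089904$ from Table~\ref{table: boundstable}, I would compute
\[
\frac{8 \beta_{13}}{13 \cdot 12} = \frac{8 \cdot 17.3135089904}{156} \geq 0.8878,
\]
which matches the rightmost column of Table~\ref{table: boundstable} for $k=13$.

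Next I would observe that the factor $m/(m-1)$ is decreasing in $m$, so specifying the bound in the form $0.8878 \cdot m/(m-1)$ (rather than evaluating at a specific $m$) is valid uniformly for every $m \geq 13$. Putting these together yields, for every $m \geq 13$,
\[
\lim_{n \to \infty} \frac{\cross(K_{m,n})}{Z(m,n)} \geq \frac{8 \beta_{13}}{13 \cdot 12} \cdot \frac{m}{m-1} \geq 0.8878 \cdot \frac{m}{m-1},
\]
which is the desired corollary.

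There is no real obstacle here beyond the numerical verification; the entire content of the corollary is to substitute the newly computed value $\beta_{13}$, whose computation (carried out in later sections via the block-diagonalization and the dual SDP with iterative constraint generation) constitutes the actual technical contribution. The corollary itself is a direct numerical consequence of the lemma applied at $k=13$.
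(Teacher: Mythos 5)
Your proposal is correct and matches the paper's approach exactly: the paper treats the corollary as an immediate consequence of the preceding lemma with $k=13$, using $q_k \geq \alpha_k \geq \beta_k$ to substitute the computed value $\beta_{13} = 17.3135089904$ and verifying $8\beta_{13}/(13\cdot 12) \geq 0.8878$. The only superfluous step is your monotonicity remark about $m/(m-1)$ --- it is not needed, since the lemma already carries the factor $m/(m-1)$ for every $m \geq 13$, and the corollary's statement retains that factor rather than evaluating it.
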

A direct result of this corollary is
\begin{align}\label{asymptoticbound}
\displaystyle{\lim_{n \to \infty} \frac{\cross(K_{n,n})}{Z(n,n)} \geq 0.8878.}
\end{align}
The previously best known published lower bound on $ \lim_{n \to \infty} \frac{\cross(K_{n,n})}{Z(n,n)} $ is $0.8594$ (which follows using~$\alpha_9$), cf. De Klerk et al.~\cite{regular}. Norin and Zwols obtained a lower bound of~$0.905$  using flag algebras which they presented at a workshop~\cite{norin}. Recently, Balogh, Lidick\'y, Norin, Pfender, Salazar, and Spiro obtained a lower bound of~$0.9118$, also using flag algebras~\cite{flagcrossing}. These flag algebra bounds are stronger than our bound in~\eqref{asymptoticbound}. In~\cite{hill}, Balogh, Lidick\'y and Salazar prove very strong asymptotic lower bounds on the crossing number of the complete graph using flag algebras. %It might be possible to improve upon~\eqref{asymptoticbound} bound by using a similar approach considering high levels in the flag algebra hierarchy. 

However, in order to prove asymptotic bounds it is also worthwhile to further investigate the quadratic programming hierarchy from De Klerk et al.~\cite{klerkcrossing} which we consider in this paper. One might hope to prove lower bounds~$t_k$ on~$\alpha_k$ such that~$8t_k /(k(k-1)) \to 1$ as~$k \to \infty$, thereby proving $\lim_{n \to \infty} \frac{\cross(K_{n,n})}{Z(n,n)}=1$, i.e., asymptotically proving Zarankiewicz' conjecture. Figure~\ref{figure: graph} gives rise to the question whether $8\beta_k /(k(k-1)) \to 1$ as~$k \to \infty$. 

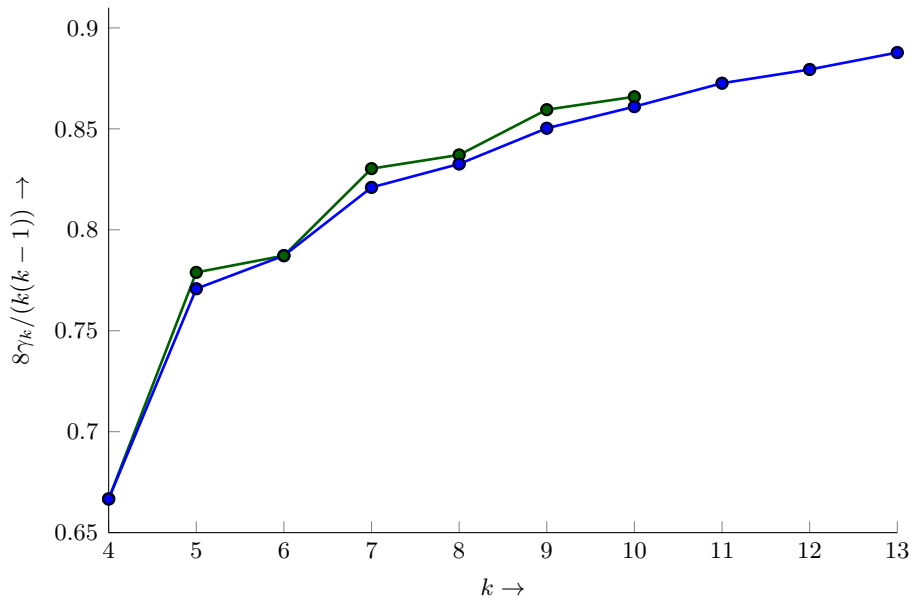
\begin{figure}[H]
\centering
\pgfplotsset{xmin=4, xmax=13, ymin=0.65, ymax=0.91}   
\begin{tikzpicture}
\begin{axis}[domain=4:10, restrict y to domain=0.6:1,
width={0.7\textwidth}, height={0.5\textwidth},
axis x line*=bottom,axis y line*=left,
    /pgf/number format/.cd,tick label style={font=\footnotesize},
 %       use comma,
    ylabel = {\footnotesize $8 \gamma_k / (k(k-1))$ $\Large\rightarrow$},
ylabel style={yshift=0cm,xshift=0cm},
xlabel={$k \rightarrow$},
xlabel style={yshift=0cm,xshift=0cm,font=\footnotesize},   %was yshift.25, xshift.4=3
/pgfplots/scaled ticks=false,
      1000 sep={},
precision=6,
ylabel style={yshift=0cm},%      ylabel={\color{firebrick} $2^n$},
   %   yticklabels=\empty,    xticklabels=\empty, xtick style={draw=none},ytick style={draw=none},%was not empty
 every axis plot/.append style={thick}]
  \addplot+ [mark=*,donkergroen2, line width={1}] table {
%2  0
%3  .6667
4  .6667
5  .7789
6  .7872
7  .8303
8  .8371
9  .8595
10 .8659
};
  \addplot+ [mark=*,donkerblauw, line width={1}] table {
%2  0
%3  .6667
4  .6667
5  .7708
6  .7872
7  .8210
8  .8326
9  .8503
10 .8610
11 .8726
12 .8794
13 .8878
};
\addplot[mark=*,donkergroen2, mark size={2.0 pt}, mark options={color=black,fill={donkergroen2}, line width={0.75}}] coordinates {(4,0.6667)};
\addplot[mark=*,donkergroen2, mark size={2.0 pt}, mark options={color=black,fill={donkergroen2}, line width={0.75}}] coordinates {(5,0.7789)};
\addplot[mark=*,donkergroen2, mark size={2.0 pt}, mark options={color=black,fill={donkergroen2}, line width={0.75}}] coordinates {(6,0.7872)};
\addplot[mark=*,donkergroen2, mark size={2.0 pt}, mark options={color=black,fill={donkergroen2}, line width={0.75}}] coordinates {(7,0.8303)};
\addplot[mark=*,donkergroen2, mark size={2.0 pt}, mark options={color=black,fill={donkergroen2}, line width={0.75}}] coordinates {(8,0.8371)};
\addplot[mark=*,donkergroen2, mark size={2.0 pt}, mark options={color=black,fill={donkergroen2}, line width={0.75}}] coordinates {(9,0.8595)};
\addplot[mark=*,donkergroen2, mark size={2.0 pt}, mark options={color=black,fill={donkergroen2}, line width={0.75}}] coordinates {(10,0.8659)};
\addplot[mark=*,donkerblauw, mark size={2.0 pt}, mark options={color=black,fill={donkerblauw}, line width={0.75}}] coordinates {(4,0.6667)};
\addplot[mark=*,donkerblauw, mark size={2.0 pt}, mark options={color=black,fill={donkerblauw}, line width={0.75}}] coordinates {(5,0.7708)};
\addplot[mark=*,donkerblauw, mark size={2.0 pt}, mark options={color=black,fill={donkerblauw}, line width={0.75}}] coordinates {(7,0.8210)};
\addplot[mark=*,donkerblauw, mark size={2.0 pt}, mark options={color=black,fill={donkerblauw}, line width={0.75}}] coordinates {(8,0.8326)};
\addplot[mark=*,donkerblauw, mark size={2.0 pt}, mark options={color=black,fill={donkerblauw}, line width={0.75}}] coordinates {(9,0.8503)};
\addplot[mark=*,donkerblauw, mark size={2.0 pt}, mark options={color=black,fill={donkerblauw}, line width={0.75}}] coordinates {(10,0.8610)};
\addplot[mark=*,donkerblauw, mark size={2.0 pt}, mark options={color=black,fill={donkerblauw}, line width={0.75}}] coordinates {(11,0.8726)};
\addplot[mark=*,donkerblauw, mark size={2.0 pt}, mark options={color=black,fill={donkerblauw}, line width={0.75}}] coordinates {(12,0.8794)};
\addplot[mark=*,donkerblauw, mark size={2.0 pt}, mark options={color=black,fill={donkerblauw}, line width={0.75}}] coordinates {(13,0.8878)};
\end{axis}
% \node [right] at (5, 101) {\scriptsize$10^6$};
\end{tikzpicture}
\caption{We have the lower bound $\lim_{n \to \infty} \text{cr}(K_{m,n})/Z(m,n) \geq (8\gamma_k/(k(k-1))) m/(m-1)$ for each~$m\geq k$ and~$\gamma_k \in \{\alpha_k,\beta_k\}$. The values $8\alpha_k/(k(k-1))$ are plotted in green and the values $8\beta_k/(k(k-1))$ are plotted in blue.   }\label{figure: graph}
\end{figure}

In Figure~\ref{figure: graph}, the increases are larger for odd~$k$ than for even~$k$, a trend which was already noted in~\cite{regular}. We now see that this trend continues for some larger~$k$. As noted in~\cite{regular}, this is reminiscent of the fact that Zarankiewicz's conjecture holds for $K_{2m,n}$ if it holds for $K_{2m-1,n}$.

\section{Exploiting the symmetry of the problem \label{sec: symmetry}}

Recall that~$Z_m$ is the set of permutations of~$[m]$ consisting of a single orbit, i.e., $Z_m$ is the set of all $m$-cycles from~$S_m$ and~$|Z_m|=(m-1)!$. The group~$G_m:=S_m \times \{ \pm 1\}$ acts on~$Z_m$ via
$$
(\pi,\varepsilon) \cdot \sigma  = \pi \sigma^{\varepsilon} \pi^{-1},
$$
for~$\sigma \in Z_m$, $(\pi,\varepsilon) \in G_m=  S_m \times  \{ \pm 1 \}$. If~$X$ is any optimum solution for the program~\eqref{mainsdp} defining~$\alpha_m$, also~$g \cdot X$ with~$(g \cdot X)_{\sigma, \tau} = X_{g \cdot \sigma, g \cdot \tau}$  is feasible for all~$g \in G_m$: the matrix~$g \cdot X$ is obtained from~$X$ by simultaneously permuting rows and columns, which preserves positive semidefiniteness, entrywise nonnegativeness and the total sum of the entries. Moreover, the objective values corresponding to~$X$ and~$g \cdot X$ are the same. Indeed, as~$g \cdot Q = Q$ for all~$g \in G_m$, one has~$\langle Q, X \rangle = \langle g \cdot Q, g\cdot X \rangle =\langle  Q, g\cdot X \rangle  $. As~$G_m$ is a finite group and the feasible region in~\eqref{mainsdp} is convex, we can replace any optimum solution~$X$ by the group average~$(1/|G_m|)\sum_{g \in G_m} g \cdot X$ to obtain a $G_m$-invariant optimum solution. So we may assume our optimum solution is~$G_m$-invariant, i.e., its entries are constant on~$G_m$-orbits of~$Z_m \times Z_m$. Hence the number of variables is \revise{the cardinality of}~$\Omega_m := (Z_m \times Z_m)/G_m$ (where~$G_m$ acts on both copies of~$Z_m$ simultaneously). \revise{The set~$\Omega_m$ is also known as the set of \emph{orbitals} of~$G_m$ acting on~$Z_m$, and $\lvert\Omega_m\rvert$ as the rank of the action of $G_m$, see, e.g.,~\cite{Cameron1999}.} The number of variables can be reduced further since~$X$ is symmetric, so the value of~$X$ on the orbit of~$(\sigma,\tau)$ is the same as the value of~$X$ on the orbit of~$(\tau,\sigma)$. We write~$\Omega_m'$ to be the collection of these `symmetric' $G_m$-orbits on~$Z_m \times Z_m$, in which orbits of $(\sigma,\tau)$ and~$(\tau,\sigma)$ are identified. This gives a significant reduction in the number of variables which was already used in~\cite{klerkcrossing}. 

It is also possible to reduce the size of the matrix~$X$ in the semidefinite programming formulation. In~\cite{regular}, the regular $*$-representation was used, which reduced checking whether a \revise{$G_m$}-invariant matrix~$X$ is positive semidefinite into checking whether a matrix of order~$|\Omega_m| \times |\Omega_m|$ is positive semidefinite. In this paper, we will reduce the matrix~$X$ further, by developing a full \emph{block-diagonalization}. For any finite group~$G$ acting on a vector space~$V$, we write~$V^G$ for the subspace of~$V$ of~$G$-invariant elements. The block-diagonalization is a bijective linear map
\begin{align}
 \Phi \, : \, \left(\C^{Z_m \times Z_m}\right)^{G_m} \to \bigoplus_{i=1}^k \C^{m_i \times m_i},
\end{align}
for some integer~$k$ and integers~$m_i$ for~$i \in [k]$, such that~$X \in \left(\C^{Z_m \times Z_m}\right)^{G_m} $ is positive semidefinite if and only if~$\Phi(X)$ is positive semidefinite. It has the property that~$\sum_{i=1}^k m_i^2 = |(Z_m \times Z_m)/G_m|= |\Omega_m|$, which is considerably smaller than~$|Z_m|^2$.

\subsection{Preliminaries on representation theory}\label{prelimsec}
We here describe the preliminaries on representation theory which we will use throughout the paper, based on a combination of the notation and definitions used in references~\cite{ brosch,gatermann, LPS,sagan}.
If~$G$ is a finite group acting on a complex vector space~$V$ of finite dimension, $V$ is called a \emph{$G$-module}. Any $G$-invariant subspace of~$V$ is called a \emph{submodule}. If $V$ and~$W$ are $G$-modules, a  \emph{$G$-homomorphism} is a  linear map~$\psi \, : \, V \to W$ with $g \cdot \psi(v) = \psi(g \cdot v)$ for all~$g \in G$ and~$v \in V$.  The modules~$V$ and~$W$ are \emph{equivalent} (or \emph{$G$-isomorphic}) if there is a bijective $G$-homomorphism (called a \emph{$G$-isomorphism}) from $V$ to $W$. A $G$-module $V$ is \emph{irreducible} if $V \neq 0$ and its only nonzero submodule is~$V$. The \emph{centralizer algebra} of the action of~$G$ on~$V$,  denoted by~$\text{End}_G(V)$,  is the algebra of $G$-homomorphisms $V \to V$.

Let again~$G$ be a finite group acting on a complex finite dimensional vector space~$V$. Then one can decompose
$
V = \bigoplus_{i=1}^k \bigoplus_{j=1}^{m_i} V_{i,j},
$
for some unique number $k$ and numbers~$m_1,\ldots,m_k$ (which are unique up to permutation), such that the~$V_{i,j}$ are irreducible submodules of~$V$ with the property that~$V_{i,j}$ is isomorphic to~$V_{i',j'}$ if and only if~$i=i'$. 

\begin{definition}[Representative set] For each~$i \leq k$ and~$j \leq m_i$ let~$u_{i,j} \in V_{i,j}$ be a nonzero vector, such that for each~$i \leq k$ and~$j,j' \leq m_i$ there exists a $G$-isomorphism from~$V_{i,j}$ to~$V_{i,j'}$ which maps~$u_{i,j}$ to~$u_{i,j'}$. Define, for each~$i \leq k$, the tuple $U_i := (u_{i,1},\ldots, u_{i,m_i})$. \revise{Call} any set~$\{U_1,\ldots,U_k\}$ obtained in this way a \emph{representative set} for the action of~$G$ on~$V$.
\end{definition}

We can view the $U_i$ as matrices by seeing the vectors $u_{i,j}$ (for $j =1,\ldots,m_i$) as its columns, and we will do so depending on the context.

 \revise{The space~$V$} has a $G$-invariant inner product~$\langle , \rangle$. Let~$\{U_1,\ldots,U_k\}$ be any representative set for the action of~$G$ on~$V$, and define the map $ \Phi \colon \text{End}_G(V) \to \bigoplus_{i=1}^k \C^{m_i \times m_i}$ which maps $ A \mapsto \bigoplus_{i=1}^k \left( \langle A u_{i,j'} , u_{i,j}  \rangle \right)_{j,j' =1}^{m_i}$. This map is linear and bijective, and it has the property that $A \succeq 0$ if and only if~$\Phi(A) \succeq 0$. This follows from classical representation theory. For a proof, see e.g.,~\cite[Proposition 2.4.4]{polak}. We apply it to the following. Suppose that~$G$ is a finite group acting on a finite set~$Z$, hence on the vector space~$V:=\C^Z$. Then $\text{End}_G(V)$ can be naturally identified with $\left(\C^{Z \times Z}\right)^G$, and the map~$\Phi$ becomes
\begin{align} 
    \Phi \colon(\C^{Z \times Z})^{G} \to \bigoplus_{i=1}^k \C^{m_i \times m_i} \,\, \text{ with } \,\, A \mapsto \bigoplus_{i=1}^k U_i^* A U_i.
\end{align}
It will turn out that all representative sets in this paper consist of real matrices. So we  can replace~$\C$ by~$\R$ in the above equation: $\Phi$ is a linear bijective map~$(\R^{Z \times Z})^{G} \to \bigoplus_{i=1}^k \R^{m_i \times m_i}$ such that~$A \succeq 0$ if and only if~$\Phi(A)\succeq 0$ for all~$A \in (\R^{Z \times Z})^{G}$.

\paragraph{Representation theory of the symmetric group.}
A \emph{partition}~$\lambda$  of $n$ is a sequence of integers~$\lambda_1 \geq \ldots \geq \lambda_h >0$ with~$\lambda_1 + \ldots +\lambda_h = n$ for some $h \in \N$ which is called the \emph{height} of~$\lambda$. We write~$\lambda \vdash n$ to denote that~$\lambda$ is a partition of~$n$. The \emph{(Young) shape} of~$\lambda \vdash n$ is an array consisting of~$n$ boxes divided into~$h$ rows where for each $1 \leq i \leq h$, the~$i$-th row contains~$\lambda_i$ boxes. As an example, consider the shape corresponding to~$(4,1,1) \vdash 6$:
$$
\ytableausetup{centertableaux,boxsize=0.7em}
\begin{ytableau}
\ & \ & \  &\  \\
\  \\
\ 
\end{ytableau}.
$$
A \emph{Young tableau of shape~$\lambda$} is a filling~$\tau$ of the boxes of the Young shape~$\lambda$ with the integers~$1,\ldots,n$, where each number appears once.  Two Young tableaux~$t$, $t'$ of shape~$\lambda \vdash n$ are \emph{(row) equivalent}, written as~$t \sim t'$ if corresponding rows of the two tableaux contain the same elements. A \emph{tabloid} of shape~$\lambda$ is an equivalence class of tableaux: $\{t\} = \{ t ' \, : \, t ' \sim t\}$. We denote a tabloid by an array with lines between the rows, e.g.,
$$
\left\{ \ytableausetup{centertableaux,boxsize=1em,notabloids}
\begin{ytableau}
1 & 3  \\
2  \\
\end{ytableau}, \begin{ytableau}
3 & 1  \\
2  \\
\end{ytableau}\right\}={\ytableausetup{centertableaux,boxsize=1em,tabloids}
\begin{ytableau}
1 & 3  \\
2  \\
\end{ytableau} }.
$$
 Any permutation~$\pi \in S_n$ acts on a tableau~$t=t_{i,j}$ by acting on its content, i.e., $\pi t = (\pi(t_{i,j}))$.  The \emph{column stabilizer} $C_{t}$ of a tableau $t$ is the subgroup of~$S_n$ which leaves the columns of~$t$ invariant. The action of~$\pi \in S_n$ on a tableau~$t$ extends to a well-defined action on tabloids via~$\pi \{t\} = \{ \pi t\}$. For each~$\lambda \vdash n$ the \emph{permutation module} $M^{\lambda}$ \emph{corresponding to $\lambda$} is defined as
$$
M^{\lambda} = \C \{ \{t_1\},\ldots,\{t_k\}\},
$$
where~$\{t_1\}, \ldots, \{t_k\}$ is a complete set of~$\lambda$-tabloids. For any tableau~$t$, the associated \emph{polytabloid} is
$
e_{t}:= \sum_{c \in C_{t}} \text{sgn}(c) c \{t\}.
$
The \emph{Specht module $S^{\lambda}$ corresponding to~$\lambda$} is the submodule of~$M^{\lambda}$ spanned by the polytabloids $e_{t}$, where $t$ is a tableau of shape~$\lambda$. The module $S^{\lambda}$ is irreducible, and it is generated by any given polytabloid: $S^{\lambda} = \C S_n\cdot e_{t}$ for any fixed $\lambda$-tableau $t$. 

A \emph{generalized} Young tableau of shape~$\lambda \vdash n$ is a (Young) shape filled with integers, where we allow repeated entries. Depending on the context, we often omit the word `generalized'. A generalized Young tableau is \emph{standard} if its rows and columns are strictly increasing, and \emph{semistandard} if its rows are nondecreasing and its columns are strictly increasing. We say that a generalized tableau of shape~$\lambda \vdash n$ has \emph{content}~$\mu =(\mu_1,\ldots,\mu_h) \vdash n$ if it contains~$\mu_i$ times the integer~$i$, for all~$1 \leq i \leq h$. If~$T$ is any tableau of shape~$\lambda$ and content~$\mu$, the map 
\begin{align*}
\vartheta_T: M^{\lambda} &\to M^{\mu},\\
\{t\} &\mapsto \sum_{T' \sim T}t[T'] \quad \revise{\text{(extended linearly to $M^\lambda$)}},
\end{align*}
\revise{where $\{t\}$ is any tabloid in $M^\lambda$, and} where
\[t[T'] := \{\text{tableau with entry $t_{i,j}$ in its $T_{i,j}'$-th row}\},\]
is an $S_n$-homomorphism. Moreover, a basis of $\text{Hom}(S^{\lambda}, M^{\mu})$ is given by (cf.\ Sagan~\cite{sagan})
$$
\{\vartheta_T \, | \, T \text{ semistandard of shape~$\lambda$ and content~$\mu$}\}.
$$
\revise{Unless specified otherwise, from now on we assume that~$t$  is the $\lambda$-tableau containing the integers~$1,\ldots,n$ in this order from left to right, from top to bottom. Sometimes we write~$t_{\lambda}$ instead of~$t$. 
It follows} that a representative set for the action of~$S_n$ on~$M^{\mu}$ is given by
\begin{align}\label{Mlambdareprset}
\{(\vartheta_T(e_{t_{\lambda}}) \,\,| \,\, T \text{ semistandard of shape~$\lambda$ and content~$\mu$})  \,\, | \,\, \lambda \vdash n\}.
\end{align}

\paragraph{Induced representations.}
Let~$G$ be a finite group, and~$H$ a subgroup of~$G$. Let~$R=\{r_1,\ldots,r_t\}$ be a full set of representatives for the left cosets of~$H$ in~$G$, so~$|R|=[G\, : \, H]$. If~$V$ is an $H$-module, the \emph{induced} module $\ind_H^G(V)$ is defined as follows. The elements of~$\ind_H^G(V)$ are (formal) sums of the form
$$
\lambda_1 (r_1,v_1) + \ldots + \lambda_t(r_t,v_t)\quad \text{for } v_1,\ldots,v_t \in V, \, \lambda_1,\ldots,\lambda_t \in \C. 
$$
(So as vector space~$\ind_H^G(V) = \oplus_{r \in R} V$.) The action of an element~$g \in G$ on~$(r_i,v)$ is defined via $g \cdot (r_i,v) = (r_j, h\cdot v)$, where~$r_j \in R$ and~$h \in H$ are uniquely determined by the equation~$g r_i = r_j h$.

\subsection{The block-diagonalization for computing~\texorpdfstring{$\alpha_k$}{alphak}}\ytableausetup{centertableaux,boxsize=1.15em}

We aim to decompose the space~$\C^{Z_m}$ as a~$\revise{G_m}$-module. The derivation will consist of three steps. 
\begin{enumerate}
\item Derive a representative set of matrices for the action of~$S_m$ on~$M^{(1^m)}$ from the elementary representation theory of the symmetric group.
\item  There is a natural surjective $G$-homomorphism $f: M^{(1^m)} \to \C^{Z_m}$. For each matrix in the representative set for the action of~$S_m$ on~$M^{(1^m)}$, construct a new matrix consisting of a minimal linearly independent set of columns of the original matrix after applying the map~$f$. The new matrices together form a representative set for the action  of~$S_m$ on~$\C^{Z_m}$, as we will show.

In general: suppose~$G$ is a finite group acting on finite dimensional vector spaces~$V$ and~$W$, and~$f:V\to W$ is a surjective $G$-homomorphism. We show how to derive a representative set for the action of~$G$ on~$W$ from a representative set for the action of~$G$ on~$V$. 
\item Use the additional $S_2 \cong \{\pm 1\}$-action to finally obtain a representative set for the action of~$S_m \times S_2$ on~$\C^{Z_m}$. 

In general: suppose that~$H$ is a finite group acting on a complex finite dimensional vector space~$V$, and that also~$S_2$ acts on~$V$. We show how to derive  a representative set for the action of~$H \times S_2$ on~$V$ from a representative set for the action of~$H$ on~$V$, provided that the $H$- and~$S_2$-actions on~$V$ commute. 
\end{enumerate}
So we first consider the action of the subgroup~$\revise{S_m  \cong S_m\times \{+1\} < S_m \times \{\pm 1\}}$ acting on~$Z_m$ by conjugation, and give an algorithm to determine a representative set for this action. Afterwards, we consider the additional $S_2 \cong \{\pm 1\}$-action to reduce the representative set further.

 %To describe the module~$\C^{Z_m}$ as an~$S_m$-module, we first recall known results about induced representations.

\subsubsection{The \texorpdfstring{$S_m$}{Sm}-action on~\texorpdfstring{$Z_m$}{Zm}}

The starting point to find a representative set for the action of~$S_m$ on~$\C^{Z_m}$ is a representative set for the action of~$S_m$ on~$M^{(1^m)}$ given in~\eqref{Mlambdareprset}.  \iffalse We then use the following identification:
$$
\C^{Z_m} \cong M^{(1^m)}/(\Z/m\Z).
$$
Here~$\Z/m\Z$ acts on a tabloid of shape~$(1^m)$ by permuting its rows cyclically (say, downwards), and a canonical representative of a tabloid~$\{t\} \in M^{(1^m)}$ in $ M^{(1^m)}/(\Z/m\Z)$ is given by permuting the rows cyclically downwards so that the symbol~$1$ appears in the first row. \fi 
%We first obtain a basis for $\text{Hom}(S^{\lambda}, M^{(1^m)}/(\Z/m\Z))$. 
%For this, we consider the \emph{Reynolds operator}~$\rho_{\Z/m\Z}$.  \SP{Why Reynolds operator, and not just the projection operator?}
We consider the natural projection
\begin{equation}
f: M^{(1^m)} \to \C^{Z_m}, \label{projectionp}
\end{equation}
mapping a tabloid which is filled row-wise with $i_1$ up to $i_m$
% $\ytableausetup{smalltableaux, tabloids}\begin{ytableau} i_1\\i_2\\\vdots\\i_m    \end{ytableau}$
to the indicator vector in $\C^{Z_m}$ corresponding to  $(i_1 i_2 \ldots i_m)$.
 % It takes a tabloid~$\{t\}$ in~$M^{(1^m)}$ and the rows are permuted cyclically  so that the symbol~$1$ appears in the first row. This gives a canonical representative of the  equivalence class of~$\{t\}$ in $M^{(1^m)}/(\Z/m\Z)$, and can be naturally identified with an~$m$-cycle from~$Z_m$.
The map~$f$ is linear and surjective, and it respects the~$S_m$-action, as for each~$\pi \in S_m$ we have
\ytableausetup{centertableaux,boxsize=1.6em,tabloids}
$$
f\left( \pi \cdot \begin{ytableau} i_1\\i_2\\\vdots\\i_m    \end{ytableau}\right) = f \left(  \begin{ytableau} \scalebox{0.7}{$\pi(i_1)$}\\\scalebox{0.7}{$\pi(i_2)$}\\\vdots\\\scalebox{0.7}{$\pi(i_m)$}    \end{ytableau} \right) = (\pi(i_1)\ldots \pi(i_m)) =\pi (i_1  \ldots i_m) \pi^{-1} =  \pi f\left(  \begin{ytableau} i_1\\i_2\\\vdots\\i_m    \end{ytableau}\right)\pi^{-1}.
$$\ytableausetup{centertableaux,boxsize=1.15em}We now use the following \revise{fact (which follows from elementary representation theory, see, e.g.,~\cite{Isaacs, Serre})} to derive a representative set for the action of~$S_m$ on~$\C^{Z_m}$.

\begin{proposition}\label{representativeprop}
Suppose that a finite group~$G$ acts on two finite-dimensional complex vector spaces~$V$ and~$W$, and suppose that~$f : V \to W$ is a surjective $G$-homomorphism. Let~$\{U_1,\ldots,U_k\}$ be a representative set for the action of~$G$ on~$V$, with~$U_i = (u_{i,j} \,|\, j=1,\ldots,m_i)$. Then the set $\{U_1', \ldots,U_k'\}$ is representative for the action of~$G$ on~$W$, where~$U_i'$ (for~$i \in [k]$) is a tuple consisting of a minimal spanning set among the $f(u_{i,j})$, with $j=1,\ldots,m_i$.
\end{proposition}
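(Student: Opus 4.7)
The plan is to use the fact that any $G$-homomorphism preserves isotypic decompositions, and then to invoke Schur's lemma to pin down the shape of $f$ on each isotypic component. Fix an index $i$ and let $S$ denote the common irreducible type of $V_{i,1},\ldots,V_{i,m_i}$, so $V^{(i)} := \bigoplus_{j=1}^{m_i} V_{i,j}$ is the corresponding isotypic component of $V$. Using the $G$-isomorphisms $V_{i,j} \to V_{i,j'}$ sending $u_{i,j}$ to $u_{i,j'}$, which exist by the definition of a representative set, I would identify $V^{(i)}$ with $S \otimes \C^{m_i}$ in such a way that $V_{i,j} = S \otimes \C e_j$ and $u_{i,j} = u \otimes e_j$ for a single fixed nonzero $u \in S$, with $G$ acting trivially on the second tensor factor.

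Next, since $f$ is a $G$-homomorphism, $f(V^{(i)})$ is again isotypic of type $S$, and by Schur's lemma over $\C$ the restriction $f|_{V^{(i)}}$ takes the form $\mathrm{id}_S \otimes A_i$ for some linear map $A_i : \C^{m_i} \to \C^{m_i'}$, where $m_i'$ denotes the multiplicity of $S$ in $W$ (possibly $0$). Because $f$ is surjective, $A_i$ is surjective onto $\C^{m_i'}$, so any minimal spanning subset of the vectors $f(u_{i,j}) = u \otimes A_i(e_j)$ for $j=1,\ldots,m_i$ has exactly $m_i'$ elements and corresponds to selecting a basis $v_1,\ldots,v_{m_i'}$ of $\C^{m_i'}$ among the columns of $A_i$.

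Setting $W_{i,j'} := S \otimes \C v_{j'}$ then yields a decomposition of the $S$-isotypic component of $W$ into irreducible submodules, each isomorphic to $S$, and the natural $G$-isomorphism $x \otimes \lambda v_{j'} \mapsto x \otimes \lambda v_{j''}$ carries $u \otimes v_{j'}$ to $u \otimes v_{j''}$. Since the entries of $U_i'$ are precisely the vectors $u \otimes v_{j'}$, this verifies the defining property of a representative set for the $S$-isotypic component of $W$. Taking the union over all types $i$, and observing that surjectivity of $f$ forces every irreducible type of $W$ to already appear in $V$ (so that no $U_i'$ is missed), concludes that $\{U_1',\ldots,U_k'\}$ is a representative set for $W$.

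The main obstacle is really bookkeeping rather than depth: Schur's lemma is doing the genuine work, but some care is required to choose the identifications of the isotypic components consistently so that the $u_{i,j}$ land at the canonical positions $u \otimes e_j$, and to make sure that `minimal spanning subset of the $f(u_{i,j})$' corresponds precisely to selecting linearly independent columns of $A_i$ and not some other subset forced by the combinatorics.
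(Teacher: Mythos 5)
Your proof is correct and follows essentially the same strategy as the paper's: both decompose $V$ into isotypic components, invoke Schur's lemma to control the action of $f$ on each irreducible summand, and exhibit $G$-isomorphisms between the nonzero images carrying $f(u_{i,j})$ to $f(u_{i,j'})$. Your tensor-product packaging $V^{(i)} \cong S \otimes \C^{m_i}$, $f|_{V^{(i)}} = \mathrm{id}_S \otimes A_i$ makes the bookkeeping around the phrase ``minimal spanning set'' a bit more transparent than the paper's direct manipulation of subspaces (the paper instead builds the needed isomorphisms explicitly as $f \circ \phi^i_{j,j'} \circ f|_{V_{i,j}}^{-1}$), but the underlying content is the same.
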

\proof
\revise{For each $i \in [k]$, let~$s_i \in \N$ and $\ell_{1}^{(i)},\ldots,\ell_{s_i}^{(i)} \in [m_i]$ be such that 
$$
U_i'=(f(u_{i,\ell_{1}^{(i)}}), \ldots, f(u_{i,\ell_{s_i}^{(i)}}))
$$
is the chosen tuple consisting of a minimal spanning set among the $f(u_{i,j})$ for $j=1,\ldots,m_i$. Define
$$
V':= \bigoplus_{i=1}^k \bigoplus_{j=1}^{s_i} \C G u_{i,\ell_{j}^{(i)}} \subseteq V,
$$
i.e., $V'$ is the restriction of the direct sum decomposition of~$V$ to the components corresponding to the chosen minimal spanning sets. 

The restriction~$f'\, : \, V' \to W$ of~$f$ to~$V'$ is a bijection.
Surjectivity of~$f'$ is clear, as~$W$, the image of $f$, is spanned by the elements
\[\left\{g\cdot f(u_{i,\ell_{j}^{(i)}}) = f(g\cdot u_{i,\ell_{j}^{(i)}})\mid i \in [k], j \in [s_i], g\in G\right\}.\]
If~$f'$ is not injective, then $\text{Ker}(f')$ contains an irreducible submodule~$M$ of $V'$. By Schur's lemma, the projection of $M$ onto the components $\oplus_{j=1}^{s_i} \C G u_{i,\ell_{j}^{(i)}}$ is zero for all but one~$i \in [k]$. Any nonzero element of~$M$ now gives rise to a nontrivial linear combination of the~$u_{i,\ell_{j}^{(i)}}$  that is in the kernel of~$f$ (for the $i$ for which the projection of~$M$ onto $\oplus_{j=1}^{s_i} \C G u_{i,\ell_{j}^{(i)}}$ is nonzero) contradicting the fact that the~$f(u_{i,\ell_{j}^{(i)}})$ ($j =1,\ldots,s_i$) are linearly independent. So $f'$ is indeed a bijection.

Since by definition the set~$\{(u_{i,\ell_{j}^{(i)}}\, | \, j=1,\ldots, s_i)\,| \, i=1,\ldots, k \} $ is representative for the action of~$G$ on~$V'$, the set
$$
\{U_1',\ldots,U_m'\} = \left\{\left(f'\left(u_{i,\ell_{j}^{(i)}}\right)\, \big| \, j=1,\ldots, s_i\right)\,\big| \, i=1,\ldots, k \right\} 
$$
is representative for the action of~$G$ on~$W$, as was needed to prove.
}
\endproof
Recall that a representative set for the action of~\revise{$S_m$ on~$M^{(1^m)}$} is given by
$$
\{\vartheta_T(e_t) \, | \, T \text{ semistandard of shape~$\lambda$ and content~$(1^m)$}\}.
$$ 
 Note that any semistandard tableaux of shape~$\lambda \vdash m$ and content $(1^m)$ is standard. Consider for each~$\lambda \vdash n$ a tuple~$U_{\lambda}$ consisting of a minimal spanning set among the vectors
\begin{align}\label{vectorsspanning}
\{f (\vartheta_T(e_t)) \, | \, T \text{ standard of shape~$\lambda$ and content~$(1^m)$}\} \subseteq \C^{Z_m}.
\end{align}
\begin{corollary}
The set~$\{U_{\lambda} \, | \, \lambda \vdash n\}$ is representative for the action of~$S_m$ on~$\C^{Z_m}$. 
\end{corollary}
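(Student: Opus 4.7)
The plan is to verify that this corollary is a direct application of Proposition~\ref{representativeprop} to the specific setup developed just before the statement. Concretely, I would take $G = S_m$, $V = M^{(1^m)}$, $W = \C^{Z_m}$, and let $f : M^{(1^m)} \to \C^{Z_m}$ be the projection defined in~\eqref{projectionp}. The two ingredients needed to invoke the proposition are (a) that $f$ is a surjective $S_m$-homomorphism, and (b) that an explicit representative set for the $S_m$-action on $M^{(1^m)}$ is already in hand.

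For (a), surjectivity is immediate: every $m$-cycle $\sigma = (i_1 i_2 \ldots i_m) \in Z_m$ is the image under $f$ of any tabloid whose rows are $i_1, i_2, \ldots, i_m$ in order. The $S_m$-equivariance was verified in the displayed computation immediately preceding the corollary. For (b), I would recall~\eqref{Mlambdareprset}, which gives a representative set for $S_m$ acting on $M^{\mu}$ for any $\mu \vdash m$; specializing to $\mu = (1^m)$ yields the tuples $(\vartheta_T(e_{t_\lambda}))_T$ indexed by partitions $\lambda \vdash m$, with $T$ ranging over semistandard tableaux of shape $\lambda$ and content $(1^m)$.

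The one observation to record is that semistandard tableaux of content $(1^m)$ coincide with standard tableaux of shape $\lambda$: since each of $1,\ldots,m$ occurs exactly once, the weak inequality along rows in the semistandard condition becomes strict, and the filling is just a bijection with strictly increasing rows and columns. Hence the representative set from~\eqref{Mlambdareprset} for $\mu=(1^m)$ is precisely $\{(\vartheta_T(e_{t_\lambda}) \mid T \text{ standard of shape } \lambda) \mid \lambda \vdash m\}$, which is exactly the family whose images under $f$ appear in~\eqref{vectorsspanning}.

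Applying Proposition~\ref{representativeprop} to $f$ now delivers the conclusion: for each $\lambda \vdash m$, any minimal spanning subset of $\{f(\vartheta_T(e_{t_\lambda})) \mid T \text{ standard of shape } \lambda\}$ is representative for the corresponding isotypic block of $\C^{Z_m}$, and the union $\{U_\lambda \mid \lambda \vdash m\}$ is representative for the full action of $S_m$ on $\C^{Z_m}$. So there is no real mathematical obstacle here — the corollary is essentially an instance check. The practical difficulty is not logical but computational: identifying, for each $\lambda$, an explicit minimal spanning subset of the vectors in~\eqref{vectorsspanning} inside $\C^{Z_m}$, a task that is deferred to the later algorithmic sections of the paper.
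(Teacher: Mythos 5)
Your proof is correct and follows the paper's argument exactly: it applies Proposition~\ref{representativeprop} with $V=M^{(1^m)}$, $W=\C^{Z_m}$, and the projection $f$ from~\eqref{projectionp}, after checking the hypotheses. Your additional remarks (surjectivity of $f$, and that content~$(1^m)$ semistandard tableaux are exactly the standard ones) spell out observations the paper makes immediately before the corollary, so nothing is missing and nothing is different in substance.
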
 
\proof
Apply Proposition \ref{representativeprop} with~$V=M^{(1^m)}$, $W=\C^{Z_m}$, and $f$ from~\eqref{projectionp}.
\endproof

We note that it is useful to maintain for each~$\lambda$ a list of the Young tableaux which give rise to the minimal spanning set among the vectors in~\eqref{vectorsspanning}. They can help to compute the coefficients in the block-diagonalizations more efficiently (but still exponential in~$m$), see Section~\ref{polynomialinnerproducts}.  

\begin{remark}\label{codingremark}
Proposition \ref{representativeprop} has a wide potential for application.  For instance, for computing bounds on the cardinality of error-correcting codes, a block-diagonalization for matrices indexed by ordered $k$-tuples of codewords can be obtained using existing tools~\cite{gijswijt,polak}. With Proposition~\ref{representativeprop}, one may further reduce this into a block-diagonalization for matrices indexed by \emph{unordered} sets of codewords of size $\leq k$.
\end{remark}

\paragraph{Discussion about finding the minimal spanning set faster.}
It is  also natural to identify~$\C^{Z_m}$ with $M^{(1^m)}/(\Z/m\Z)$, where~$\Z/m\Z$ permutes the rows of a tabloid in~$M^{(1^m)}$ cyclically. Brosch~\cite{brosch} developed a fast method in the context of flag algebras to decompose any module~$M^{\mu}/F$, where~$F$ is a group acting on the rows of~$\mu$ via permutations.  However, the computational results presented in this paper can be obtained without this speed-up: we can compute the representative set for~$\alpha_{k}$ for~$k \leq 10$  using the method from Proposition~\ref{representativeprop}, and the representative set for our new relaxation $\beta_k$ is described explicitly in Section~\ref{sec: betak}.

The method of Brosch~\cite{brosch} allows to avoid working with the vectors $\vartheta_T(e_t)$ explicitly,  which is desirable given the high dimension of $M^{(1^m)}$. The key observation is  
$$\mathrm{Hom}(S^\lambda,M^{(1^m)}/(\Z/m\Z)) = \mathcal{R}_{\Z/m\Z}(\mathrm{Hom}(S^\lambda,M^{(1^m)})),$$
by identifying the quotient $M^{(1^m)}/(\Z/m\Z)$ with the elements $v$ in $M^{(1^m)}$ with $\sigma(v)=v$ for all $\sigma\in \Z/m\Z$. Here $\mathcal{R}_{\Z/m\Z}$ denotes the \emph{Reynolds operator} of $\Z/m\Z$ on $\mathrm{Hom}(S^\lambda,M^{(1^m)})$, which averages over the group
$$\mathcal{R}_{\Z/m\Z} (\vartheta_T) \coloneqq \frac{1}{m}\sum_{\sigma \in \Z/m\Z}\sigma(\vartheta_T).$$
The action of $\Z/m\Z$ on homomorphisms $\vartheta_T$ is given by 
$\sigma(\vartheta_T) = \vartheta_{\sigma(T)},$
where $\sigma$ is applied to $T$ entrywise. The method of \cite{brosch} results in  a matrix representation of $\mathcal{R}_{\Z/m\Z}$ in the semistandard basis, so that one can choose the homomorphisms corresponding to a spanning set of rows to find a basis of $\mathrm{Hom}(S^\lambda,M^{(1^m)}/(\Z/m\Z))$. The advantage is that one works in a space of dimension $\mathrm{dim}(\mathrm{Hom}(S^\lambda, M^{(1^m)}))$ instead of $\mathrm{dim}(M^{(1^m)}) = m!$.

%If we can find a matrix representation of $\mathcal{R}_{\Z/m\Z}$ in the semistandard basis, we can choose the homomorphisms corresponding to a spanning set of rows to find a basis of $\mathrm{Hom}(S^\lambda,M^{(1^m)}/(\Z/m\Z))$ we can describe and use efficiently.

%The problem is that $\sigma(T)$ is generally not semistandard again, and it may not be clear how to express $\vartheta_{\sigma(T)}$ as linear combination of homomorphisms coming from semistandard tableaux. The algorithm of \cite{brosch} computes this decomposition. If we now use it to determine the matrix representing the operation of a generator of $\Z/m\Z$, we can use it to compute the Reynolds operator explicitly, and thus decompose $M^{(1^m)}/(\Z/m\Z)$ into irreducible Specht modules.

As mentioned before, knowing the description of the columns $\vartheta_T(e_t)$ of the representative set  in terms of tableaux is useful for the computations, see Section~\ref{polynomialinnerproducts}.

%: We can apply the algorithms in Sven's thesis to compute the symmetrized products efficiently. This is crucial to make the computation time reasonable.
%Daniels Algorithm:
%Hom(Smu, Mlambda/G) = R_G(Hom(Smu,M^lambda))

%we start with semistandard tableaux, then for each one we look how each group element how it acts, gives matrix and then average. 

%Reynolds operator sends Mlambda to quotient 

\paragraph{The multiplicities of the irreducible representations.}
It can be shown that the module~$\C^{Z_m}$ is $S_m$-isomorphic to a module which has been described in the literature. This allows us to immediately obtain the  multiplicities of the irreducible representations of~$\C^{Z_m}$ as an~$S_m$-module.
\begin{proposition}
As~$S_m$-modules, we have~$\C^{Z_m} \cong \ind_{\Z/m \Z}^{S_m} 1$.
\end{proposition}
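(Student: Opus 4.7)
The plan is to recognize the left-hand side as the permutation module of a transitive action and then to apply the standard identification of such modules with induced representations from the stabilizer of a point.

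First, I would observe that $Z_m$ is exactly the conjugacy class of $m$-cycles in $S_m$, hence a single orbit under the conjugation action $(\pi,\sigma)\mapsto \pi\sigma\pi^{-1}$. In particular, the $S_m$-action on $Z_m$ is transitive, so upon fixing the base point $\sigma_0 := (1\,2\,\cdots\,m) \in Z_m$, every element of $Z_m$ has the form $\pi\sigma_0\pi^{-1}$ for some $\pi\in S_m$.

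Second, I would identify the stabilizer of $\sigma_0$. The stabilizer is the centralizer $C_{S_m}(\sigma_0)$, and a classical fact is that the centralizer of an $m$-cycle in $S_m$ is the cyclic subgroup it generates. One can verify this directly: if $\pi \sigma_0 \pi^{-1}= \sigma_0$, then $\pi$ commutes with $\sigma_0$, so $\pi$ is determined on all of $[m]$ by $\pi(1)$ (because $\pi(\sigma_0^k(1)) = \sigma_0^k(\pi(1))$), giving exactly $m$ such permutations, namely $\sigma_0^0,\sigma_0^1,\ldots,\sigma_0^{m-1}$. Thus $\mathrm{Stab}_{S_m}(\sigma_0) = \langle\sigma_0\rangle \cong \Z/m\Z$.

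Third, by the orbit–stabilizer theorem the map $\pi\langle\sigma_0\rangle \mapsto \pi\sigma_0\pi^{-1}$ is a well-defined $S_m$-equivariant bijection between the coset space $S_m/\langle\sigma_0\rangle$ and $Z_m$. Linearising gives an $S_m$-isomorphism
\[
\C^{Z_m} \;\cong\; \C[S_m/\langle\sigma_0\rangle] \;\cong\; \ind_{\langle\sigma_0\rangle}^{S_m} 1 \;=\; \ind_{\Z/m\Z}^{S_m} 1,
\]
where the middle identification is exactly the description of an induced representation from the trivial module recalled in the preliminaries, using a transversal $R$ of $\langle\sigma_0\rangle$ in $S_m$ as coset representatives. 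The only point that needs genuine care, and so is the main (minor) obstacle, is to check that the $S_m$-action on the coset space by left multiplication is carried by this bijection to the conjugation action on $Z_m$: given $g\in S_m$, one has $g\cdot(\pi\langle\sigma_0\rangle) = (g\pi)\langle\sigma_0\rangle \mapsto (g\pi)\sigma_0(g\pi)^{-1} = g(\pi\sigma_0\pi^{-1})g^{-1}$, which is exactly the conjugation action on the image. With this compatibility in place, the proposition follows.
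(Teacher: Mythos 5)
Your proof is correct and establishes the same isomorphism as the paper, but via the standard structural route rather than by hand. The paper defines the map $\phi:\C^{Z_m}\to\ind_{\Z/m\Z}^{S_m}1$ explicitly --- each $m$-cycle $\sigma$ is written with $\sigma_1=1$ and sent to $(r,1)$ where $r(i)=\sigma_i$ --- and then verifies equivariance by a direct one-line computation and bijectivity on bases. You instead observe that $Z_m$ is a transitive $S_m$-set, compute the stabilizer of $\sigma_0=(1\,2\cdots m)$ to be $\langle\sigma_0\rangle\cong\Z/m\Z$ (a step the paper implicitly takes for granted), invoke the orbit--stabilizer theorem to get the equivariant bijection $S_m/\langle\sigma_0\rangle\cong Z_m$, and then use the general identification $\C[G/H]\cong\ind_H^G 1$. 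The two proofs have identical mathematical content: your coset representatives are exactly the paper's chosen permutations $r$, and your compatibility check at the end is exactly the paper's equivariance computation in disguise. Your version is a bit more conceptual and self-contained (the centralizer argument is spelled out), while the paper's is more concrete and avoids invoking orbit--stabilizer; either is a complete proof.
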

\proof 
Define the map $\phi:\C^{Z_m} \to \ind_{\Z/m \Z}^{S_m} 1$ by mapping the standard basis vector $e_\sigma$ corresponding to~$\sigma=(\sigma_1 \,\sigma_2 \ldots \sigma_m) \in Z_m$ with $\sigma_1=1$ to the basis element $(r,1)$ in $\ind_{\Z/m \Z}^{S_m} 1$, where~$r$ is the permutation which maps~$i$ to~$\sigma_i$ for each~$i \in [m]$. Then
\begin{align}
    \phi(\pi\cdot e_{\sigma}) = \phi (e_{\pi \sigma \pi^{-1}}) = \phi (e_{(\pi \sigma_1 \,\pi \sigma_2 \,\ldots \,\pi \sigma_m)}) = (\overline{\pi r},1) = \pi \cdot \phi(e_{\sigma}),
    \end{align}
    for each~$\pi \in S_m$, where $\overline{\pi r}$ is the representative of the class of the permutation $\pi r$ with $\overline{\pi r}(1) = 1$. 
So~$\phi$ respects the~$S_m$-action.  As~$\phi$ is also a bijection between the bases of~$\C^{Z_m}$ and $\ind_{\Z/m \Z}^{S_m} 1$, its linear extension is an~$S_m$-isomorphism.
\endproof 
\noindent It is known~\cite{liepaper} that 
\begin{align}\label{eq:inducedalambda}
    \text{Ind}_{\Z/m \Z}^{S_m} 1 \cong \bigoplus_{\lambda \vdash m} a_{\lambda} S^{\lambda},
\end{align}
where $a_{\lambda}$ is the number of standard tableaux~$T$ of shape~$\lambda$ with $c(T)=0 \pmod{m}$, where~
\begin{align}
&c(T) \text{ is }\text{the sum of all $a$ in~$T$ for which~$a+1$ appears in a row} \notag\\&\text{strictly below $a$'s row}.
\end{align}
  So it is not hard to determine the multiplicities of the irreducible representations of $\C^{Z_m}$ as $S_m$-module. We however need the decomposition explicitly, to obtain an explicit representative set.

\subsubsection{The \texorpdfstring{$S_2\cong \{\pm 1\}$}{S2}-action on~\texorpdfstring{$Z_m$}{Zm}}
The~$S_m$-action and the~$S_2 \cong \{ \pm 1 \}$-action on~$\C^{Z_m}$ commute. This enables us to compute a representative set for the action of~$S_m \times S_2$ on~$\C^{Z_m}$, starting with a given representative set for the action of~$S_m$ on~$Z_m$. We first state the setting in a general form, and then prove a proposition which allows us to derive the full symmetry reduction. 

%Actions of~$S_2$ are common and have many applications. For example, in~\cite[Section 3.4]{quadruples}  a representative set of  the action of~$S_2$ on a vector space~$\C^Z$ is given, where~$Z$ is a finite set. Here we show how to directly derive a representative set for the action of~$H \times S_2$ on~$V$, from a representative set for the action of $H$ on~$V$ (if~$H$ and~$S_2$ both act on  a finite dimensional complex vector space~$V$ and these actions commute). 

%So, we can compute the block-diagonalization corresponding to the~$S_m$-action, and subsequently compute the block-diagonalization corresponding to the $\{\pm 1\}$-action. 

\subsubsection{Representative set of~\texorpdfstring{$H \times S_2$}{HxS2}-action}
Let~$H$ be a finite group acting on a finite-dimensional complex vector space $V$ and suppose a representative set~$\{U_1,\ldots,U_k\}$ where~$U_i=(u_{i,1},\ldots,u_{i,m_i})$ (for~$i \leq k$) for the action of~$H$ on $V$ is given. Suppose that also~$S_2=\{1,\eta\}$ acts on~$V$, and that the actions of~$H$ and~$S_2$ on~$V$ commute. Let~$L_{\pm} := \{ x \, | x= \pm \eta x\}$, so that $L_+$ and $L_-$ are the eigenspaces of~$\eta$. Proposition~\ref{s2proposition} shows how to obtain a representative set for the action of~$H \times S_2$ on~$V$, generalizing~\cite[Section 3.4]{quadruples} (which considers~$S_2$-actions on a finite set~$Z$).
\begin{proposition}\label{s2proposition}
A representative set for the action of~$H\times S_2$ on $V$ is the set
$\{U_1^+, U_1^-, \ldots, U_k^+, U_k^-\}$, where $U_i^+$ is a tuple consisting of a linearly independent subset among the vectors $u_{i,j}^+:= u_{i,j} + \eta \cdot u_{i,j}$ (for $j=1,\ldots,m_i$), and $U_i^-$ is a tuple consisting of a linearly independent subset among the vectors $u_{i,j}^-:= u_{i,j} - \eta \cdot u_{i,j}$ (for $j=1,\ldots,m_i$).
\end{proposition}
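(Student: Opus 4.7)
The plan is to reduce the claim to Proposition~\ref{representativeprop} by splitting $V$ according to the eigenspaces of $\eta$. Since $\eta^2 = \mathrm{id}$, one has the decomposition $V = L_+ \oplus L_-$. Because the $H$-action and the $S_2$-action commute, both $L_+$ and $L_-$ are $H$-submodules of $V$, and the maps $p_\pm \colon V \to L_\pm$ defined by $p_\pm(v) := v \pm \eta \cdot v$ are $H$-homomorphisms. Surjectivity follows from the fact that $p_\pm$ restricted to $L_\pm$ is multiplication by $2$.

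First, I would apply Proposition~\ref{representativeprop} with $f = p_+$, starting from the given representative set $\{U_1,\ldots,U_k\}$ for the $H$-action on $V$. Since $p_+(u_{i,j}) = u_{i,j}^+$, the tuples $U_i^+$ from the statement are precisely the output of Proposition~\ref{representativeprop}, and therefore form a representative set for the $H$-action on $L_+$. Applying the same proposition with $p_-$ in place of $p_+$ yields that $\{U_1^-,\ldots,U_k^-\}$ is a representative set for the $H$-action on $L_-$.

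Next, I would upgrade the two $H$-representative sets to a joint $H \times S_2$-representative set on $V = L_+ \oplus L_-$. On $L_+$ the involution $\eta$ acts as $+1$, so each irreducible $H$-submodule there is also an irreducible $H \times S_2$-submodule, of the form $W \otimes \mathrm{triv}$; on $L_-$, $\eta$ acts as $-1$, and the irreducible $H \times S_2$-summands have the form $W \otimes \mathrm{sgn}$. Any $H$-isomorphism between two submodules lying both in $L_+$ (or both in $L_-$) is automatically $\eta$-equivariant, hence an $H \times S_2$-isomorphism; and irreducible summands coming from opposite sides are never $H \times S_2$-isomorphic because they differ in the $S_2$-character. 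Concatenating the two representative sets therefore yields the representative set for $H \times S_2$ on $V$ asserted in the proposition.

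The main obstacle is not really technical but conceptual: one has to check that the passage from \emph{representative for $H$} to \emph{representative for $H \times S_2$} only requires the compatibility of the two commuting actions, via the fact that $\eta$ acts as a fixed scalar on each isotypic component sitting inside a single eigenspace. Once the decomposition $V = L_+ \oplus L_-$ is set up and this observation is in place, the argument reduces cleanly to two separate applications of Proposition~\ref{representativeprop}.
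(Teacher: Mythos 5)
Your argument is correct, and it takes a somewhat different route from the paper's. You factor the proof through Proposition~\ref{representativeprop}: you observe that the projections $p_\pm(v)=v\pm\eta\cdot v$ onto the eigenspaces $L_\pm$ are surjective $H$-homomorphisms, apply Proposition~\ref{representativeprop} twice to obtain $H$-representative sets $\{U_1^+,\ldots,U_k^+\}$ for $L_+$ and $\{U_1^-,\ldots,U_k^-\}$ for $L_-$, and then upgrade these to $H\times S_2$-representative sets by the observation that $\eta$ acts by a fixed scalar ($+1$ or $-1$) on each eigenspace, so every $H$-submodule, $H$-irreducible, and $H$-isomorphism inside a single eigenspace is automatically an $H\times S_2$-object, while irreducibles coming from opposite eigenspaces cannot be $H\times S_2$-isomorphic because they carry different $S_2$-characters. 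The paper's proof does not explicitly invoke Proposition~\ref{representativeprop}; it instead redoes the underlying Schur's-lemma argument inline, showing directly that $(I\pm\eta)V_{i,j}$ is either $\{0\}$ or $H$-irreducible, then that it is $H\times S_2$-irreducible, then that $(I+\eta)u_{i,j}\mapsto(I+\eta)u_{i,j'}$ gives $H\times S_2$-isomorphisms, and finally that the $(I\pm\eta)V_{i,j}$ together span $V$. The underlying mathematics is identical — eigenspace decomposition of $\eta$, Schur's lemma, constancy of the $\eta$-action on each eigenspace to upgrade from $H$ to $H\times S_2$ — but your modular reuse of Proposition~\ref{representativeprop} avoids some redundancy and makes the upgrade step the only genuinely new ingredient, which is a clean way to organize the argument. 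One small caveat: both the statement and your proof say ``linearly independent subset,'' which should be read as a \emph{maximal} linearly independent subset (equivalently, a minimal spanning set, as in Proposition~\ref{representativeprop}) for the representative set to be complete; this is implicit in the paper's phrasing as well.
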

\proof 
\revise{Since the actions of~$H$ and~$S_2$ on~$V$ commute, both~$L_+$ and~$L_-$ are $H \times S_2$-invariant subspaces of~$V$. The maps~$f^+ : V \to L_+$ and $f^- : V \to L_-$ given by~$f^+(v)=(I+\eta)v$ and $f^-(v)=(I-\eta)v$ are surjective~$H \times S_2$-homomorphisms. From Proposition~\ref{representativeprop} it now follows that~$\{U_1^+,\ldots,U_k^+\}$ and~$\{U_1^-,\ldots,U_k^-\}$ are representative sets for the actions of~$H\times S_2$ on~$L_+$ and~$L_-$, respectively. 

Note that~$V=L_+ \oplus L_-$. Also, if~$W_1 \subseteq L_+$ and~$W_2 \subseteq L_-$, are irreducible $H\times S_2$-modules, then they are non-isomorphic: indeed, if~$\psi:W_1\to W_2$ were an $H\times S_2$-isomorphism, then for each~$x\in W_1$ we have~$\psi(x)=\psi(\eta x)=\eta\psi(x)$, as~$x \in L_+$, but also~$\psi(x)=-\eta\psi(x)$, as~$\psi(x)\in L_-$, so~$\psi(x)=0$. So the union $\{U_1^+,\ldots,U_k^+\} \cup \{U_1^-,\ldots,U_k^-\}$ of representative sets for the actions of~$H \times S_2$ on~$L_+$ and~$L_-$ is a representative set for the action of~$H\times S_2$ on~$V$. }
\endproof

For our semidefinite program this means that, in the block-diagonalization for the action of~$S_m$ on $\C^{Z_m}$, the block corresponding to the matrix~$U_\lambda$ will split into two blocks in the block-diagonalization for the action of~$S_m \times S_2$ on~$\C^{Z_m}$: one corresponding to~$U_{\lambda}^+$ and one corresponding to~$ U_{\lambda}^-$.

\section{The relaxation~\texorpdfstring{$\beta_m$}{betak}\label{sec: betak}}
When computing~$\alpha_m$, we use the symmetry reduction from the previous section and require that all blocks in the block-diagonalization of~$X$ are positive semidefinite. As~$\alpha_m$ is a minimization problem, only requiring one block to be positive semidefinite will yield a lower bound on~$\alpha_m$. From our computer experiments it follows that one small block seems `special': only requiring this block to be positive semidefinite yields a remarkably good lower bound on~$\alpha_m$. It is the block corresponding to~$U_{\lambda}^-$, where~$\lambda=(m-2,1,1) \vdash m$.  This observation gives rise to a new relaxation~$\beta_m$ of~$\alpha_m$, in which we only require the mentioned block to be positive semidefinite. 
The primal of the program~$\beta_m$ is
\begin{align} \label{betam:primal}
    \beta_m = \min \left\{\langle Q, X\rangle \, | \, X \in \R^{Z_m \times Z_m}_{\geq 0}, \, \langle J,X \rangle =1, \, (U_{\lambda}^{-})\T X U_{\lambda}^{-}  \succeq 0  \right\},  
\end{align}
where~$\lambda=(m-2,1,1)$. It turns out that we can explicitly describe the columns of the matrix $U_{\lambda}^{-}$ using Young tableaux. We first describe the matrix~$U_{\lambda}$. Define the tableau
$$\ytableausetup{notabloids}
M_i :=  \ytableaushort{{}{}{\cdots}{}{},{2},{i}}, \quad \text{ for $i \in \{3,\ldots,m\}$}. 
$$
%We claim that the tableaux~$M_3,\ldots, M_{\floor{\tfrac{m+1}{2}}+1}$ or computing~$\beta_m$. 

%\subsection{Proof of the correct representative set}

\begin{proposition}\label{betacombinatorialproposition}
The matrix~$U_{\lambda}$ can be chosen to be the matrix consisting of the columns~$f(\vartheta_{M_i}(e_t))$ for $i=3,\ldots, \floor{\tfrac{m+1}{2}}+1$. 
\end{proposition}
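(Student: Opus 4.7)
My plan is to compute the vectors $v_T := f(\vartheta_T(e_t))$ explicitly and show they form a highly degenerate family whose independent directions are indexed only by the difference $d := T_{3,1} - T_{2,1} \pmod{m}$. Setting $\alpha = T_{2,1}$ and $\beta = T_{3,1}$, I first unpack the sum $\vartheta_T(\{t\}) = \sum_{T' \sim T} t[T']$: since only row~$1$ of $T$ may be permuted, each $T'$ produces a sequence $(z_1,\ldots,z_m)$ with $z_\alpha = m-1$, $z_\beta = m$, and with $(z_k)_{k\neq\alpha,\beta}$ ranging over all bijections from $[m]\setminus\{\alpha,\beta\}$ onto $[m]\setminus\{m-1,m\}$. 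A given cyclic permutation $\sigma \in Z_m$ is realised exactly once by such a sequence, and that happens iff $\sigma^d(m-1) = m$. Hence
$$ f(\vartheta_T(\{t\})) \;=\; A_d^{m-1,m}, \quad \text{where } (A_d^{a,b})_\sigma := \mathbf{1}[\sigma^d(a) = b]. $$
Because $f$ is $S_m$-equivariant and the column stabiliser $C_t \cong S_3$ permutes $\{1,m-1,m\}$ (acting on $\C^{Z_m}$ by conjugation), pushing the polytabloid antisymmetrizer through $f$ gives
$$ v_T \;=\; \sum_{c \in C_t} \text{sgn}(c)\, c\cdot A_d^{m-1,m}, $$
so $v_T$ depends on $T$ only through $d$; I therefore write $v_T = v_d$. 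Substituting $c \mapsto c\cdot(m-1,m)$ in this sum and using $\sigma^{m-d} = \sigma^{-d}$ then yields the key antisymmetry $v_{m-d} = -v_d$ for every $d \in \{1,\ldots,m-1\}$.

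Since $M_i$ has $(\alpha,\beta) = (2,i)$, the columns listed in the proposition are precisely $v_1, v_2, \ldots, v_{\lfloor (m-1)/2\rfloor}$. The relation $v_{m-d} = -v_d$ already shows these span $\{v_d : 1\le d\le m-2\}$, hence the full set $\{f(\vartheta_T(e_t)) : T \text{ standard of shape } \lambda\}$. To conclude linear independence I would invoke~\eqref{eq:inducedalambda}: a standard $(m-2,1,1)$-tableau is determined by the pair $(a,b) = (T_{2,1},T_{3,1})$ with $2 \le a < b \le m$, and a short case analysis (splitting on $b = a+1$ versus $b > a+1$) shows that in both cases $c(T) = a+b-2$. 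Thus $c(T) \equiv 0 \pmod{m}$ iff $a+b = m+2$, giving $a_\lambda = \lfloor (m+1)/2\rfloor - 1 = \lfloor (m-1)/2\rfloor$, which matches the number of listed columns and therefore forces them to be a basis of the span.

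The main technical obstacle is the first step, the explicit identification $f(\vartheta_T(\{t\})) = A_d^{m-1,m}$: the definitions of $\vartheta_T$, of the rearrangement $t[T']$, and of the projection $f$ must be unwound in parallel to see that the row-$1$ reorderings of $T$ are in bijection with the cyclic permutations satisfying $\sigma^d(m-1) = m$, each counted exactly once. Once this closed form is in hand, commuting the column antisymmetrizer with the equivariant map $f$, deriving $v_{m-d} = -v_d$, and the dimension count via~\eqref{eq:inducedalambda} are all short routine manipulations.
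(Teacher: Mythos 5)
Your proof is correct and follows the paper's overall structure: prove that the listed columns span the column space, then use the multiplicity count $a_\lambda=\lfloor(m-1)/2\rfloor$ from~\eqref{eq:inducedalambda} (via the observation $c(T)=a+b-2$ and the constraint $a+b=m+2$) to conclude they are linearly independent. Where you genuinely improve on the paper is in the spanning step: you derive the closed form $f(\vartheta_T(\{t\}))=A_d^{m-1,m}$, push the column antisymmetrizer through the equivariant map $f$ to get $v_d=\sum_{c\in C_t}\text{sgn}(c)\,A_d^{c(m-1),c(m)}$, and then obtain the sign relation $v_{m-d}=-v_d$ from the identity $A_d^{a,b}=A_{m-d}^{b,a}$ together with the substitution $c\mapsto c\cdot(m-1,m)$. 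The paper, arguing directly from the six-term expansion~\eqref{betavectorexplicit}, asserts the unsigned equality $f(\vartheta_{M_{m-i}}(e_t))=f(\vartheta_{M_{i+4}}(e_t))$, which amounts to $v_{m-d}=v_d$; this is a small sign slip, and your antisymmetric relation is the correct one (it also makes manifest that $v_{m/2}=0$ for even $m$, consistent with the count $\lfloor(m-1)/2\rfloor$, whereas unsigned equality would not explain why the middle vector degenerates). Either relation yields the same span and therefore the proposition, so the conclusion is unaffected, but your route is cleaner and more accurate.
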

\proof
First, we calculate $a_{\lambda}$ from~\eqref{eq:inducedalambda} for the partition~$\lambda=(m-2,1,1) \vdash m$. Recall that~$a_{\lambda}$ is the number of semistandard tableaux~$T$ with~$c(T)= 0 \pmod{m}$. Suppose that a standard tableau~$T$ has~$a$ and~$b$ as entry in its second and third row, so~$1<a<b\leq m$. Moreover~$c(T)$ is zero modulo~$m$ if and only if~$(a-1)+(b-1)=0 \pmod{m}$. There are exactly $\floor{\tfrac{m-1}{2}}$ pairs~$a,b$ satisfying $1<a<b\leq m $ and $a+b=m+2$, so $a_{\lambda} = \floor{\tfrac{m-1}{2}}$. So the number of columns of the matrix~$U_{\lambda}$ is $ \floor{\tfrac{m-1}{2}}$, which is exactly the number of vectors~$f(\vartheta_{M_i}(e_t))$ given in this proposition.

We now show that if~$T$ is any standard tableau of shape~$\lambda$, then $  f(\vartheta_{T}(e_t)) = f(\vartheta_{M_i}(e_t))$ for some~$i= 3,\ldots, \floor{\tfrac{m+1}{2}}+1$. It then follows that the given set of columns is a spanning set for the column space of~$U_{\lambda}$, and by the previous paragraph it has the correct size, so it is minimal and we are done. Note that if
$$
T_1 = \ytableaushort{{}{}{\cdots}{}{},{a_1},{b_1}} \text{ and } T_2=\ytableaushort{{}{}{\cdots}{}{},{a_2},{b_2}}
$$
are standard of shape $(\revise{m-2},1,1)$ and content~$(1^m)$, with $b_1-a_1 = b_2-a_2$, then $f(\vartheta_{{T_1}}(e_t)) = f(\vartheta_{T_2}(e_t))$. To see this, note that 
\begin{align}\label{betavectorexplicit}\ytableausetup{centertableaux,boxsize=1.4em,tabloids}
\vartheta_{{T_1}}(e_t) = \sum \hspace{1pt}\begin{ytableau}     
 \raisebox{1pt}{\scalebox{0.65}{$\substack{\ast\\\ast\\\ast}$}} \\
 \scalebox{0.75}{$m\hspace{-3pt}-\hspace{-3pt}1$}  \\
 \raisebox{1pt}{\scalebox{0.65}{$\substack{\ast\\\ast\\\ast}$}} \\
   m  \\
   \raisebox{1pt}{\scalebox{0.65}{$\substack{\ast\\\ast\\\ast}$}}\\
\end{ytableau} - \sum \hspace{1pt}\begin{ytableau}     
  \raisebox{1pt}{\scalebox{0.65}{$\substack{\ast\\\ast\\\ast}$}} \\
  m  \\
  \raisebox{1pt}{\scalebox{0.65}{$\substack{\ast\\\ast\\\ast}$}} \\
  \scalebox{0.75}{$m\hspace{-3pt}-\hspace{-3pt}1$}  \\
  \raisebox{1pt}{\scalebox{0.65}{$\substack{\ast\\\ast\\\ast}$}} \\
\end{ytableau} -\sum\hspace{1pt} \begin{ytableau}     
    \raisebox{1pt}{\scalebox{0.65}{$\substack{\ast\\\ast\\\ast}$}} \\
 1  \\
  \raisebox{1pt}{\scalebox{0.65}{$\substack{\ast\\\ast\\\ast}$}} \\
 m  \\
  \raisebox{1pt}{\scalebox{0.65}{$\substack{\ast\\\ast\\\ast}$}} \\
\end{ytableau}+\sum\hspace{1pt} \begin{ytableau}     
  \raisebox{1pt}{\scalebox{0.65}{$\substack{\ast\\\ast\\\ast}$}} \\
  m \\
   \raisebox{1pt}{\scalebox{0.65}{$\substack{\ast\\\ast\\\ast}$}} \\
 1  \\
  \raisebox{1pt}{\scalebox{0.65}{$\substack{\ast\\\ast\\\ast}$}} \\
\end{ytableau}-\sum\hspace{1pt} \begin{ytableau}     
  \raisebox{1pt}{\scalebox{0.65}{$\substack{\ast\\\ast\\\ast}$}} \\
  \scalebox{0.75}{$m\hspace{-3pt}-\hspace{-3pt}1$}  \\
  \raisebox{1pt}{\scalebox{0.65}{$\substack{\ast\\\ast\\\ast}$}} \\
 1  \\
  \raisebox{1pt}{\scalebox{0.65}{$\substack{\ast\\\ast\\\ast}$}} \\
\end{ytableau}+\sum\hspace{1pt} 
\begin{ytableau}     
    \raisebox{1pt}{\scalebox{0.65}{$\substack{\ast\\\ast\\\ast}$}} & \none[]\\
 1 & \none[ \hspace{1.5cm}\leftarrow \text{row } a_1]   \\
    \raisebox{1pt}{\scalebox{0.65}{$\substack{\ast\\\ast\\\ast}$}} & \none[ ] \\
 \scalebox{0.75}{$m\hspace{-3pt}-\hspace{-3pt}1$} & \none[ \hspace{1.5cm}\leftarrow \text{row } b_1]  \\
    \raisebox{1pt}{\scalebox{0.65}{$\substack{\ast\\\ast\\\ast}$}} & \none[ ]\\
\end{ytableau}\hspace{-12pt},\hspace{30pt}
\end{align}
where each sum is over all \revise{tabloids of shape and content~$(1^m)$ with the given fixed entries in rows $a_1$ and $b_1$. Thus, each sum is over $(m-2)!$ tabloids.}
The vector~$\vartheta_{T_2}(e_t)$ is obtained from~\eqref{betavectorexplicit} upon replacing~$a_1$ and~$b_1$ by~$a_2$ and~$b_2$, respectively.
As~$b_1-a_1=b_2-a_2$, each term in the sum expansions of $\vartheta_{{T_1}}(e_t)$ and $\vartheta_{{T_2}}(e_t)$ represent, after projection, the same element of~$Z_m$. So  $f(\vartheta_{{T_1}}(e_t)) = f(\vartheta_{T_2}(e_t))$. 
So the vector~$f(\vartheta_{T_1}(e_t))$ is the same as one of the $f(\vartheta_{M_i}(e_t))$ with~$ 3 \leq i \leq m$, namely the one with $i-2=b-a$. 

The proof is completed by observing that  $f(\vartheta_{M_{m-i}}(e_t))=  f(\vartheta_{M_{i+4}}(e_t))$ for all~$i=0,\ldots,m-4$, as the projection of any~$\vartheta_{M_j}(e_t)$ onto~$Z_m$ only depends on the distance between~$j$ and~$2$ mod $m$. The distinct nonzero distances mod~$m$ between~$i$ and~$2$ are~$1,\ldots,\floor{\tfrac{m-1}{2}}$, which corresponds to~$i= 3,\ldots, \floor{\tfrac{m+1}{2}}+1$.  
So  if~$T$ is any standard tableau of shape~$\lambda$, then~$  f(\vartheta_{T}(e_t)) = f(\vartheta_{M_i}(e_t))$ for some~$i= 3,\ldots, \floor{\tfrac{m+1}{2}}+1$.
\endproof

It is not hard to verify using~\eqref{betavectorexplicit} that $\eta \cdot f(\vartheta_{T_i}(e_t)) = -f(\vartheta_{T_i}(e_t))$, where~$\eta$ is the inversion action on~$Z_m$. From this it follows that the columns of $U_{\lambda}^{-}$ can be taken to be the same columns as the columns of~$U_{\lambda}$, and that the matrix~$U_{\lambda}^{+}$ is the zero matrix. In Section~$5$ we will therefore only work with the matrix~$U_{\lambda}$ and not with the matrix~$U_{\lambda}^-$.

\section{Computation \label{sec: computation}}

In this section, we comment on the computation. First we explain how we compute the entries of~$Q$, taking into account its symmetries. After that, we describe how to compute the entries in the block-diagonalizations more efficiently. 
Then we give the dual semidefinite program of~$\beta_m$, which has nice features: a small matrix block which is required to be positive semidefinite, and few variables. However, it has $|\Omega_m'|$  linear constraints, which is a very large number.\footnote{Recall that~$\Omega_m:=  (Z_m \times Z_m)/G_m$ is the collection of nonempty $G_m$-orbits of~$Z_m \times Z_m$, and~$\Omega_m'$ is the collection of nonempty $G_m$-orbits on~$Z_m \times Z_m$ in which additionally orbits of $(\sigma,\tau)\in Z_m\times Z_m$ and~$(\tau,\sigma)$ are identified.} In the final section we explain how we computed~$\beta_m$ using this dual description in practice. 

\subsection{Computing the matrix~\texorpdfstring{$Q$}{Q} with Dijkstra's algorithm}
To compute the entries of the matrix~$Q$, we follow Woodall~\cite{woodall}. Construct a graph~$\Gamma_m$ with vertex set~$Z_m$, and~$\{\sigma,\gamma\}$ is an edge if~$\gamma$ can be obtained from~$\sigma$ by one transposition of adjacent elements of~$\sigma$. Then the entry~$Q_{\sigma,\tau}$ is equal to the length of a shortest path from~$\sigma$ to~$\tau^{-1}$ in~$\Gamma_m$, which can be computed with Dijkstra's shortest path algorithm. 
We only apply Dijkstra with the source node~$\sigma=(12\ldots m)$,   as we only want the value of~$Q_{\sigma,\tau}$ on $G_m$-orbits of~$Z_m \times Z_m$.  

A speed-up inside Dijkstra algorithm which takes into account symmetry is based on the observation that~$\sigma = (12\ldots m)$ is fixed by the elements~$(\sigma,1)$ and~$(\rho,-1)$ of~$G_m$, where $\rho$ is such that~$\rho \sigma^{-1} \rho^{-1} = \sigma$. So the subgroup $H_m$ of~$G_m$ generated by these two elements fixes~$\sigma$, and hence has the property that $Q_{\sigma, h \cdot \tau} = Q_{h \cdot \sigma, h \cdot \tau} = Q_{\sigma, \tau}$ for any $h \in H_m$ and~$\tau \in Z_m$.  We represent each~$H_m$-orbit of~$Z_m$  by its lexicographically smallest element.  We maintain a priority queue~$S$ of elements with their distances, and a set~$L$ of visited orbit representatives of~$Z_m$ under~$H_m$, and a distance $d:=0$. The priority queue~$S$ initially consists of~$(12\ldots m)$ with distance $0$, and~$L$ consists of~$\sigma=(12\ldots m)$.

\revise{As long as there are orbits in $S$, we pop the element~$\tau$ from~$S$ with the smallest distance, increase~$d$ by~$1$, and check all cycles in~$ Z_m$ reachable from $\tau$ with one swap of adjacent elements in~$\tau$. These cycles are replaced with the unique representatives of their orbits, and the new orbit representatives are added to $L$, as well as to the queue~$S$ with distance~$d$. This is repeated until $S$ is empty.} %We repeat this paragraph until $S$ is empty. 

\subsection{Computing the inner products\label{polynomialinnerproducts}}

Let~$\lambda \vdash m$ and~$u_{T_1}=f(\vartheta_{T_1}(e_{t_{\lambda}}))$, $u_{T_2}=f(\vartheta_{T_2}(e_{t_{\lambda}}))$ be columns of~$U_{\lambda}$. Let $X \in (\C^{Z_m \times Z_m})^{G_m}$. The inner products are of the form %= 2u_{T_1}\T X u_{T_2}+\eta(2u_{T_1}\T X u_{T_2}),
$$
((1 + \eta) \cdot u_{T_1})\T X ((1 +  \eta)\cdot  u_{T_2}) \,\,\,\,\, \text{ or }\,\,\,\,\, ((1 - \eta)\cdot  u_{T_1})\T X ((1 -  \eta)\cdot  u_{T_2}).
$$
By symmetry one has $(\eta \cdot u_{T_1})\T X  (\eta\cdot   u_{T_2}) = u_{T_1}\T X   u_{T_2}$ and $(\eta\cdot  u_{T_1})\T X   u_{T_2} =  u_{T_1}\T X  (\eta \cdot   u_{T_2})$. So to compute the inner products, we must compute expressions of the form $ u_{T_1}\T X   u_{T_2}$ and $(\eta \cdot  u_{T_1})\T X   u_{T_2}$.  Note that
\begin{align} \label{eq:innerprod}
u_{T_1}\T X u_{T_2} =\sum_{\substack{T_1' \sim T_1,  T_2' \sim T_2}} \sum_{c,c' \in C_{t}}  \text{sgn}(cc') x_{\omega(f(t[c T_1']), f(t[c'  T_2']))},
\end{align} 
where~$f$ from~\eqref{projectionp} maps a tabloid to the corresponding~$m$-cycle in~$Z_m$, and $\omega(\sigma,\tau) \in \Omega_m'$ denotes the orbit of~$(\sigma,\tau)\in Z_m \times Z_m$. If we have~$\eqref{eq:innerprod}$, then one can also obtain $(\eta \cdot  u_{T_1})\T X   u_{T_2}$ from it by replacing each variable $x_{\omega(f(t[c T_1']), f(t[c'  T_2']))}$ by $x_{\omega(\eta \cdot f(t[c T_1']), f(t[c'  T_2']))}$. So we now focus on computing~$\eqref{eq:innerprod}$. One can compute the inner products by using~\eqref{eq:innerprod} (and we succeeded to compute~$\alpha_{10}$ in that way). We now describe a method which is faster in practice and which we used in our implementation. Since $|\Omega_m'|$ is exponential in~$m$, one cannot hope for a running time polynomial in~$m$.   Let~$Y(\lambda)$ be the set of (row,column)-coordinates indicating the boxes of~$\lambda$. Define the polynomial
\begin{align}\label{sympoly}
p_{T_1,T_2}(Z):=\sum_{\substack{T_1' \sim T_1, T_2' \sim T_2}} \sum_{c,c' \in C_{t}} \text{sign}(cc')\prod_{y \in Y(\lambda)} z_{c T_1'(y),c'T_2'(y)},
\end{align}
for $Z=(z_{j,h})_{j,h=1}^m \in \R^{m \times m}$. 
One can express~$p_{T_1,T_2}$ as a linear combination of monomials with the algorithms of \cite{gijswijt} or~\cite{LPS}. This allows to compute the inner product fast in many instances for error-correcting codes (see e.g.,~\cite{gijswijt, polak}).  The method was generalized to be applicable to arbitrary permutation modules in the setting of flag algebras (cf.~\cite{brosch}). 

There is a one-to-one correspondence between $S_m$-orbits of pairs of tabloids $(t[cT_1], t[c'  T_2])$ and monomials $ \prod_{y \in Y(\lambda)} z_{cT_1'(y),c'T_2'(y)}$ via their  \emph{overlap}, i.e., the numbers of elements of each row of the first tabloid which appear in each row of the second. The overlap of two tabloids~$\{t_1\}$ and~$\{t_2\}$ can be described by a monomial
$
\prod_{i,j=1}^m z_{i,j}^{ (| \{t_1\}_i \cap \{t_2\}_j|)},
$
where~$m$ is the number of parts of~$\lambda$ and~$\{t\}_i$ denotes the set of elements in the $i$-th row of a tabloid~$\{t\}$. So to compute~\eqref{eq:innerprod}, we can compute~\eqref{sympoly}, and then replace each monomial of degree~$m$ in the variables~$z_{i,j}$ by  the  variable $x_{\omega(t[cT_1], t[c'  T_2])}$, where $(t[cT_1], t[c'  T_2])$ is any element in the $S_m$-orbit of pairs of tabloids corresponding to the monomial in~$z_{i,j}$.

\paragraph{Computing~\eqref{sympoly}.} We here state the method from~\cite{gijswijt}, which is easy to implement and uses only methods for addition, multiplication, and differentiation of polynomials. Given two generalized Young tableaux~$T_1,T_2$, define 
\begin{alignat*}{3} 
&r(s,j) := \text{number of $s$'s in row $j$ of $T_1$}, \quad\,\, &&u(s,j) := \text{number of $s$'s in row $j$ of $T_2$},\notag\\ 
&d_{s \to j}:= \sum_{i=1}^m x_{s,i} \frac{\partial}{\partial x_{j,i}}, \,\, \text{ and } && d_{j \to s}^*:= \sum_{i=1}^m x_{i,s} \frac{\partial}{\partial x_{i,j}}.\notag
\end{alignat*} 
Also, define the polynomial 
$
P_{\lambda}(Z):= \prod_{k=1}^m \left( k! \, \text{det} \left(  (z_{i,j})_{i,j=1}^{k} \right) \right)^{\lambda_{k}-\lambda_{k+1}}$ in variables~$z_{i,j}$, where~$i,j \in [m]$ and   $\lambda_{m+1}:=0$.
Then it holds~\cite[Theorem~7]{gijswijt} that
\begin{align*}  
    p_{T_1, T_2 }(X) = \left( \prod_{j=1}^{m-1} \prod_{s=j+1}^m   \frac{1}{r(s,j)!\,u(s,j)!}    (d_{s \to j})^{r(s,j)}  (d_{j \to s}^*)^{u(s,j)}\right) \cdot  P_{\lambda}(Z).
\end{align*}\vspace{-17pt}

\subsection{The dual semidefinite program}

First, note that the dual of the original semidefinite program~$\alpha_m$ is
\begin{align}\label{alpha:dual}\alpha_m= \max\{t \, | \, Q - tJ - Y \succeq 0, Y\in \R^{Z_m \times Z_m}_{\geq 0} \}.\end{align}
To show that this is indeed an equality, one needs to show that strong duality holds. This is indeed the case, as the primal~\eqref{mainsdp} is strictly feasible (set $X=aJ + bI$, where 
$a=\tfrac{1}{2((m-1)!)^2}$ and $b=\tfrac{1}{2(m-1)!}$), while the dual is feasible with $t=0$ and~$Y=Q-\Delta(Q)$, where~$\Delta(Q)$ is a matrix which is zero outside the diagonal and which has the same diagonal entries as~$Q$.

\begin{table}[ht]\small
    \centering
    \begin{tabularx}{1.0\textwidth}{rrrXr}\toprule
$m$&  $|\Omega_m| $ & $|\Omega_m'|$ & \text{block sizes $m_i$ for $\alpha_m$} & $\sum m_i$\\\midrule
         4  & 3 &3 & $1^{3}$ & 3\\
         5  & 8 &  7 & $2^{1}1^{4}$ & 6\\
         6  & 20  & 17& $2^{3}1^{8}$ & 14\\
         7  & 78 & 56 & $ 3^{6}2^{4}1^{8}$ & 34\\
         8  & 380  & 239 &$7^{2}5^{2}4^{9}3^{7}2^{4}1^{9}$ & 98 \\
         9  & 2438 & 1366 & $12^{8}11^{2}9^{6}7^{3}6^{5}5^{2}4^{2}3^{16}1^{5}$ & 294 \\[2pt]
         10 & 18744  & 9848 &  $38^{2}34^{1}31^{1}29^{1}28^{1}26^{3}24^{2}22^{4}20^{5}18^{3}16^{4}14^{6}13^{1}12^{2}10^{4}\allowbreak 9^{1}8^{7}6^{8}4^{7}3^{1}2^{7}1^{3}$ & 952\\[2pt]
         11 &166870 & 85058 & $105^{4}80^{2}60^{6}56^{4}55^{2}54^{2}50^{8}45^{6}44^{2}40^{6}\allowbreak34^{2}30^{6}29^{2}26^{2} 25^{2} 24^{2}20^{6}16^{2}15^{2}11^{4}10^{8}6^{4}\allowbreak5^{14}4^{2}1^{2}$ & 3246 \\[12pt]
         12 & 1670114 &840906  &  $327^{1}317^{1}243^{1}241^{2}238^{1}234^{4}199^{1}191^{1}187^{1}177^{1}176^{1}172^{1}169^{1}163^{1}162^{2}155^{2}\allowbreak150^{1}147^{1}146^{4}144^{1}137^{2}133^{2}  132^{1}128^{1}127^{1}121^{1}117^{3} 113^{1}110^{1}106^{1}102^{1}98^{3}93^{2}\allowbreak 91^{1}90^{2}87^{4} 86^{2}84^{1}83^{1}82^{2}81^{1}79^{1}77^{1}75^{1}74^{1}72^{4}71^{2}68^{1} 66^{1}64^{2}59^{4}56^{2}50^{1}49^{1}47^{2}\allowbreak45^{1}44^{3}41^{2}37^{3}36^{1}34^{1}32^{2}29^{1} 26^{1}\allowbreak 25^{1}  24^{1}19^{3}17^{3}16^{1}14^{5}13^{4}12^{3}10^{1}9^{4}7^{5} 6^{1}5^{3}4^{1}3^{1}\allowbreak2^{5}1^{2}$  & 11698 \\       
         13 & 18446184 & 9244958 & & \\\bottomrule     
    \end{tabularx}
    \caption{%The column~`$S_m$-orbits' denotes the number $|(Z_m \times Z_m)/S_m|$, i.e., the number of orbits without exploiting the additional $\{\pm 1\}$-action. 
 The number of variables in our SDP is $|\Omega_m'| = \sum m_i(m_i+1)/2$, and for the block sizes~$m_i$ for computing~$\alpha_m$ we have~$\sum m_i^2 = |\Omega_m|=|(Z_m \times Z_m)/G_m|$. The block sizes are given in the format $(\text{block size})^{\text{multiplicity}}$.\label{tableorbitslabel}}
\end{table}

We now describe the dual of~$\beta_m$. The primal of the program~$\beta_m$ is
\begin{align}\label{betaprimalwithoutsymmetry}
    \beta_m = \min \left\{\langle Q, X\rangle \, | \, X \in \R^{Z_m \times Z_m}_{\geq 0}, \, \langle J,X \rangle =1, \, U_{\lambda}\T X U_{\lambda}  \succeq 0  \right\},  
\end{align}
where~$\lambda=(m-2,1,1)$.  For each~$\omega \in \Omega_m'$, let~$K_{\omega}$ be the indicator matrix of~$\omega$, i.e., the~$(Z_m \times Z_m)$-matrix with $(K_{\omega})_{\sigma,\tau}=1$ if $(\sigma,\tau) \in \omega$ and $(K_{\omega})_{\sigma,\tau}=0$ otherwise. %\revise{The matrix $K_{\omega}$ is a sum of up to two \emph{basis matrices} of orbitals, cf.\ \cite{Cameron1999}.}\revise{(This is known as the (symmetrized) \emph{basis matrix} of the orbital $\omega$, cf.\ \cite{Cameron1999}.)}
%$$
%(K_{\omega})_{\sigma,\tau} := \begin{cases} 1   &  \text{if $(\sigma,\tau) \in \omega$,}  \\ 0  & \text{otherwise}. \end{cases}
%$$
As~$X$ is~$G_m$-invariant, we may write~$X= \sum_{\omega \in \Omega_m'} K_{\omega} x_{\omega}$. We define for each~$\omega \in \Omega_m'$ the constant matrix~$A_{\omega}:= U_{\lambda}\T K_{\omega} U_{\lambda}$. Let~$q_{\omega}$ denote the common value of~$Q_{(\sigma,\tau)}$ for $(\sigma,\tau) \in \omega$. So we may rewrite~\eqref{betaprimalwithoutsymmetry} as
\begin{align}
    \beta_m &= \min \big\{\sum_{\omega \in \Omega_m'} |\omega| x_{\omega}q_{\omega} \, : \, x_{\omega} \geq 0  \, \forall \omega \in \Omega_m', \,\sum_{\omega \in \Omega_m'}  |\omega| x_{\omega}= 1, \,  \sum_{\omega \in \Omega_m'} x_{\omega} A_{\omega} \succeq 0  \big\}.\notag
\end{align}
The dual of this semidefinite program is (again strong duality holds)
\begin{align}\label{betamdual}
    \beta_m = \max \big\{ t \, :  \, Y \in \R^{\floor{\tfrac{m-1}{2}} \times \floor{\tfrac{m-1}{2}}}, \,  Y \succeq 0, \,  \forall \omega \in \Omega_m'\, : \, \langle Y, A_{\omega}\rangle + |\omega| t \leq |\omega|q_{\omega} \big\}. 
\end{align}
This dual has few variables and only a very small matrix block which is required to be positive semidefinite. The main difficulty is that there are many linear constraints, as can be seen in Table \ref{tableorbitslabel}.

\begin{remark}
We observed some structure in the optimal solutions $Y$ of the dual \eqref{betamdual} of $\beta_m$ computationally. Up to $m=13$, the rank of the optimal $Y$ is one if $m$ is odd, and $2$ if $m$ is even (and $m>4$). Furthermore, the eigenvector of the cases where $m$ is odd behaves similarly for each $m$, as can be seen in Figure \ref{figure:betaSolutions}. This gives us some hope that the optimal solutions can be constructed analytically, potentially leading to improved bounds for bigger $m$ in the future.

\begin{figure}[ht]
\centering %\scalebox{1}{
\begin{tikzpicture}
\begin{axis}[width={0.7\textwidth}, height={0.5\textwidth}, tick label style={font=\footnotesize},xmin={0.85}, xmax={6.15}, xtick={{1.0,2.0,3.0,4.0,5.0,6.0}}, xticklabels={{$M_3$,$M_4$,$M_5$,$M_6$,$M_7$,$M_8$}}, ymin={0}, ymax={2.5}, ytick={{0.0,0.5,1.0,1.5,2.0,2.5}}, yticklabels={{$0.0$,$0.5$,$1.0$,$1.5$,$2.0$,$2.5$}},axis x line*=bottom,axis y line*=left]
    \addplot[color={donkerblauw}, draw opacity={1.0}, line width={1}]
        table[row sep={\\}]
        {
            \\
            1.0  0.5477225575051661  \\
            2.0  0.3385111569432115  \\
        }
        ;
    \addplot[color={donkerblauw}, only marks, draw opacity={1.0}, line width={0}, solid, mark={*}, mark size={2.0 pt}, mark repeat={1}, mark options={color={rgb,1:red,0.0;green,0.0;blue,0.0}, draw opacity={1.0}, fill={donkerblauw}, fill opacity={1.0}, line width={0.75}, rotate={0}, solid}]
        table[row sep={\\}]
        {
            \\
            1.0  0.5477225575051661  \\
            2.0  0.3385111569432115  \\
        }
        ;
    \addplot[color={donkerblauw}, draw opacity={1.0}, line width={1}, solid]
        table[row sep={\\}]
        {
            \\
            1.0  0.9241589976025947  \\
            2.0  0.7763005370264053  \\
            3.0  0.46693002673905953  \\
        }
        ;
    \addplot[color={donkerblauw}, only marks, draw opacity={1.0}, line width={0}, solid, mark={*}, mark size={2.0 pt}, mark repeat={1}, mark options={color={rgb,1:red,0.0;green,0.0;blue,0.0}, draw opacity={1.0}, fill={donkerblauw}, fill opacity={1.0}, line width={0.75}, rotate={0}, solid}]
        table[row sep={\\}]
        {
            \\
            1.0  0.9241589976025947  \\
            2.0  0.7763005370264053  \\
            3.0  0.46693002673905953  \\
        }
        ;
    \addplot[color={donkerblauw}, draw opacity={1.0}, line width={1}, solid]
        table[row sep={\\}]
        {
            \\
            1.0  1.300774920542659  \\
            2.0  1.1370941601767157  \\
            3.0  0.9514402546468679  \\
            4.0  0.6237302263556502  \\
        }
        ;
    \addplot[color={donkerblauw}, only marks, draw opacity={1.0}, line width={0}, solid, mark={*}, mark size={2.0 pt}, mark repeat={1}, mark options={color={rgb,1:red,0.0;green,0.0;blue,0.0}, draw opacity={1.0}, fill={donkerblauw}, fill opacity={1.0}, line width={0.75}, rotate={0}, solid}]
        table[row sep={\\}]
        {
            \\
            1.0  1.300774920542659  \\
            2.0  1.1370941601767157  \\
            3.0  0.9514402546468679  \\
            4.0  0.6237302263556502  \\
        }
        ;
    \addplot[color={donkerblauw}, draw opacity={1.0}, line width={1}, solid]
        table[row sep={\\}]
        {
            \\
            1.0  1.7015830148583757  \\
            2.0  1.537366014525392  \\
            3.0  1.320333055885324  \\
            4.0  1.0574761315138692  \\
            5.0  0.6208679500074303  \\
        }
        ;
    \addplot[color={donkerblauw}, only marks, draw opacity={1.0}, line width={0}, solid, mark={*}, mark size={2.0 pt}, mark repeat={1}, mark options={color={rgb,1:red,0.0;green,0.0;blue,0.0}, draw opacity={1.0}, fill={donkerblauw}, fill opacity={1.0}, line width={0.75}, rotate={0}, solid}]
        table[row sep={\\}]
        {
            \\
            1.0  1.7015830148583757  \\
            2.0  1.537366014525392  \\
            3.0  1.320333055885324  \\
            4.0  1.0574761315138692  \\
            5.0  0.6208679500074303  \\
        }
        ;
    \addplot[color={donkerblauw}, draw opacity={1.0}, line width={1}, solid]
        table[row sep={\\}]
        {
            \\
            1.0  2.0904835031007987  \\
            2.0  1.9700264113263073  \\
            3.0  1.7231417188918703  \\
            4.0  1.4626064532562786  \\
            5.0  1.2220506765769963  \\
            6.0  0.6175223936822627  \\
        }
        ;
    \addplot[color={donkerblauw}, only marks, draw opacity={1.0}, line width={0}, solid, mark={*}, mark size={2.0 pt}, mark repeat={1}, mark options={color={rgb,1:red,0.0;green,0.0;blue,0.0}, draw opacity={1.0}, fill={donkerblauw}, fill opacity={1.0}, line width={0.75}, rotate={0}, solid}]
        table[row sep={\\}]
        {
            \\
            1.0  2.0904835031007987  \\
            2.0  1.9700264113263073  \\
            3.0  1.7231417188918703  \\
            4.0  1.4626064532562786  \\
            5.0  1.2220506765769963  \\
            6.0  0.6175223936822627  \\
        }
        ;
    \node[right, color={donkerblauw}, draw opacity={1.0}, rotate={0.0}, font={\footnotesize}]  at (axis cs:1.5,0.5) {$v_5$};
    \node[right, color={donkerblauw}, draw opacity={1.0}, rotate={0.0}, font={\footnotesize}]  at (axis cs:2.5,0.65) {$v_7$};
    \node[right, color={donkerblauw}, draw opacity={1.0}, rotate={0.0}, font={\footnotesize}]  at (axis cs:3.5,0.82) {$v_9$};
    \node[right, color={donkerblauw}, draw opacity={1.0}, rotate={0.0}, font={\footnotesize}]  at (axis cs:4.5,0.87) {$v_{11}$};
    \node[right, color={donkerblauw}, draw opacity={1.0}, rotate={0.0}, font={\footnotesize}]  at (axis cs:5.5,0.94) {$v_{13}$};
\end{axis}
\end{tikzpicture}%}
\vspace{-4pt}
\caption{The vectors $v_m \in \R^{\floor{\tfrac{m-1}{2}}}$ such that the optimal solution of the dual \eqref{betamdual} of $\beta_m$ is given by $Y = \frac{1}{(m-1)!}v_mv_m\T$. \revise{Note that $v_m$ can be indexed by~$M_i$ ($i=3,\ldots, \lfloor\tfrac{m+1}{2}\rfloor+1$) as in Proposition~\ref{betacombinatorialproposition}. Each plotted function corresponds to the coefficients of one~$v_m$, where a point at position $(M_i, x)$ signifies that the coordinate of $v_m$ corresponding to $M_i$ is $x$.}} \label{figure:betaSolutions}
\end{figure}
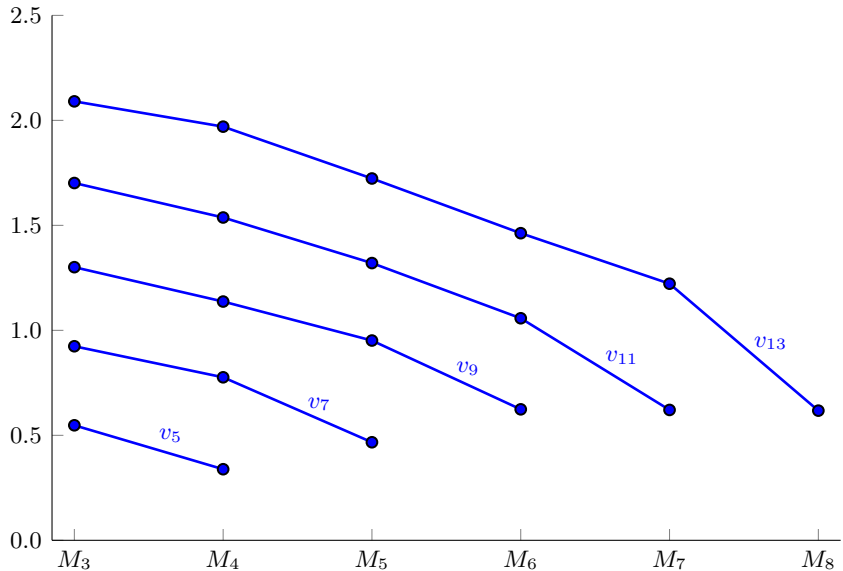
\end{remark}

\subsection{Iterative procedure to obtain the bounds~\texorpdfstring{$\beta_m$}{betam}}
\revise{ To solve~\eqref{betamdual}  on the computer, we follow a cut generation method: First the semidefinite program is solved without the linear constraints. Then:
\begin{itemize} 
 \item All of the constraints are evaluated. (As $m$ grows, this takes up most of the runtime.) %Our SDPs contain exact integer data, and we solve them with sufficiently high precision\footnote{in the sense of TODO} in SDPA-GMP.
 \item We add the most violated constraint as a new constraint to the semidefinite program. When there are ties, we choose the most violated constraint that was evaluated first. 
 \item The semidefinite program is solved again.
 \end{itemize} 
 These steps are repeated, until no constraints are violated anymore.} In theory this procedure could take~$|\Omega_m'|$ iterations. In practice however, the number of iterations is much smaller, and we are able to compute~$\beta_m$ for~$m \leq 13$ up to high precision on a desktop computer --- see Table~\ref{table: boundstable}.\footnote{The julia code used is publicly available via the link: \url{https://github.com/CrossingBounds/CrossingNumber}.}

 \revise{\subsection{Verifying the bounds}\label{sec: verify bounds}}
 \revise{We explain the procedure used to verify our lower bounds. For the bound~$\beta_m$, the starting point is formulation~\eqref{betamdual}. For the bound~$\alpha_m$, one can derive the following analogous formulation. For~$\lambda \vdash m$ and~$\varepsilon \in \{\pm 1\}$, let~$m_{\lambda}^{\varepsilon}$ denote the number of columns of~$U_{\lambda}^{\varepsilon}$ in the representative set for the action of~$S_m \times S_2$  on~$\C^{Z_m}$ we derived in Section 2. Also, for $\omega \in \Omega_m'$, define the matrix~$C_{\omega} := \oplus_{\lambda \vdash m, \, \varepsilon \in \{\pm 1\}} (U_{\lambda}^{\varepsilon})\T K_{\omega} U_{\lambda}^{\varepsilon}$. Then
\begin{align}\label{alphamdual2}
    \alpha_m = \max \Bigg\{ t \, \colon  \, Y \in \hspace{-5pt}\bigoplus_{\substack{\lambda \vdash n\\ \varepsilon \in \{\pm 1 \}}} \R^{m_{\lambda}^{\varepsilon} \times m_{\lambda}^{\varepsilon}}, \,  Y \succeq 0, \,  \forall \omega \in \Omega_m'\, \colon \, \langle Y, C_{\omega}\rangle + |\omega| t \leq |\omega|q_{\omega} \Bigg\}. 
\end{align}
Note that all our SDP's contain integer data after block-diagonalization, so in the SDP-input there is no rounding. However, the high-precision interior-point solution~$(t,Y)$ to~\eqref{betamdual} or~\eqref{alphamdual2} obtained from the solver may exhibit tiny infeasibilities. To obtain a rational feasible solution, we do the following:
\begin{itemize}
    \item Round~$t$ to a rational number~$t'$, and round the eigenvalues~$\lambda_i$ and eigenvectors~$v_i$ of~$Y$ to rationals $\hat \lambda_i$ and rational vectors $\hat v_i$. Construct a new matrix~$Y':=\sum_{i'} \hat\lambda_{i'} \hat v_{i'} \hat v_{i'}\T$ from the nonnegative rounded eigenvalues and the corresponding rounded eigenvectors. Then $Y' \succeq 0$.
    \item Check each of the inequalities (involving only rational numbers) in~\eqref{betamdual} or~\eqref{alphamdual2} using the rational matrix~$Y'$. If the inequality corresponding to~$\omega$ is violated, replace $t'$ by $(|\omega|q_{\omega}-\langle Y', C_{\omega}\rangle)/|\omega|$ so that the inequality is not violated anymore. 
\end{itemize}
In this way, we obtain rational feasible solutions $(t', Y')$ to \eqref{betamdual} or~$\eqref{alphamdual2}$ and thus guaranteed lower bounds on~$\alpha_m$ and $\beta_m$. The obtained lower bounds coincide with the approximations of~$\alpha_m$ and~$\beta_m$ computed by the solver for all decimals given in Table~\ref{table: boundstable}. (At least 40 decimals are correct for all computed bounds except $\alpha_{10}$ using SDPA-GMP~\cite{nakata}, and at least 13 decimals are correct for~$\alpha_{10}$ using SDPA-DD.)}

%-make a function roundRationalPSD, the solution matrix Y.  from (19) 
%If it is negative, round up to 0. Otherwise: compute eigenvectors, values: round to rationals. Delete negative eigenvalues and make new matrix Y' from rounded eigenvectors. Guaranteed to be PSD and rational.
%-compute all eigenvalues and vectors.
%-go through constraints. And reformulate so that t <= omega q - <Y,Aomega)/omega.  If needed: shift t so that inequality is satisfied. 

\section*{Acknowledgements} The authors thank Sander Gribling, Etienne de Klerk, Monique Laurent, Bart Litjens and Lex Schrijver for useful discussions. \revise{The authors also thank the anonymous referees and the editor for their careful reading and valuable comments to improve the content and presentation of the paper, as well as the proofs of Propositions~\ref{representativeprop} and~\ref{s2proposition}.

Most of this research was carried out while D.\ Brosch was with Tilburg University, Tilburg and S.\ Polak was with Centrum Wiskunde \& Informatica, Amsterdam.}

%\selectlanguage{english} 

\end{document}